\theoremstyle{definition}
\newcommand{\Aut}    {\mathrm{Aut}}
\newcommand{\Deg}    {\textrm{Deg }}
\newcommand{\Ker}   {\mathrm{Ker}}
\newcommand{\id}   {\mathrm{id}}
\newcommand{\M}    {\mathcal{M}}
\newcommand{\HH}    {\mathcal{H}}
\newcommand{\Hasse}    {\mathcal{H}}
\newtheorem{thm}{Theorem}
\newcommand{\pd}    {\partial\Delta}
\newtheorem{lem}[thm]   {Lemma}
\newtheorem{rem}[thm]   {Remark}
\newtheorem{defn}[thm]  {Definition}
\newtheorem{ex}[thm]    {Example}
\newtheorem{prop}[thm]  {Proposition}
\newcounter{foo}  \Alph{foo}
\title{On the automorphism group of the Morse complex}
\author{Maxwell Lin, Nicholas A. Scoville}
\date{April 2019}
\begin{document}
\tikzset{->-/.style={decoration={
  markings,
  mark=at position .5 with {{->}}},postaction={decorate}}}

\maketitle
\begin{abstract}
Let $K$ be a finite, connected, abstract simplicial complex.  The Morse complex of $K$, first introduced by Chari and Joswig, is the simplicial complex constructed from all gradient vector fields on $K$.  We show that if $K$ is neither the boundary of the $n$-simplex nor a cycle, then $\Aut(\M(K))\cong \Aut(K)$. In the case where $K= C_n$, a cycle of length $n$, we show that $\Aut(\M(C_n))\cong \Aut(C_{2n})$. In the case where $K=\pd^n$, we prove that $\Aut(\M(\pd^n))\cong \Aut(\pd^n)\times \mathbb{Z}_2$. These results are based on recent work of Capitelli and Minian.
\end{abstract}

\noindent \textit{Keywords:} Discrete Morse theory, automorphism group, Morse complex, gradient vector field\\

\noindent \textit{2000 MSC:}Primary:  55U05, 08A35 Secondary: 52B05,  57Q05

\section{Introduction}

In 2005, Chari and Joswig \cite{CJ-2005} introduced the Morse complex of a simplicial complex.  The Morse complex is based on Forman's discrete Morse theory \cite{F-95, F-02} where after fixing a simplicial complex $K$, one builds a new simplicial complex $\M(K)$ from the collection of all gradient vector fields or arrows on $K$.  Chari and Joswig computed the homotopy type of the Morse complex when $K$ is the $n$-simplex.  Ayala et al. have shown that the pure Morse complex of a tree is collapsible and some other results on the pure Morse complex of an arbitrary graph \cite{A-F-Q-V-08}. Kozlov studied shellability and other properties for trees \cite{Kozlov99}, although the language of the Morse complex was not available to him at the time.  Recently, Capitelli and Minian showed that the isomorphism type of the Morse complex completely determines the isomorphism type of the corresponding simplicial complex \cite{CM-17}.  Other than these results, very little is known about the Morse complex.  Its sheer size alone makes it a notoriously complex object of study.

The goal of this paper is to compute the automorphism group of the Morse complex $\M(K)$.  We derive a formula relating $\Aut(\M(K))$ to $\Aut(K)$ for $K$ any finite, connected, abstract simplicial complex.  Our main result is the following:

\begin{thm}\label{thm: main} Let $K$ be a finite, connected, abstract simplicial complex.  Then
$$
\Aut(\M(K))\cong
\begin{cases}
\Aut(K) & \text{if } K\neq \pd^n,C_n \\
\Aut(C_{2n}) & \text{if } K= C_n \\
\Aut(K)\times \mathbb{Z}_2 & \text{if } K= \pd^n. \\
\end{cases}
$$

\end{thm}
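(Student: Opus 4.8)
The plan is to prove the uniform statement $\Aut(\M(K)) \cong \Aut(\HH_K)$, where $\HH_K$ denotes the undirected Hasse diagram of the face poset of $K$, and then read off the three cases by computing $\Aut(\HH_K)$. First I would construct the natural homomorphism $\Phi\colon \Aut(K) \to \Aut(\M(K))$: a simplicial automorphism $f$ permutes the covering pairs $\sigma \lessdot \tau$ (the vertices of $\M(K)$) via $(\sigma \to \tau) \mapsto (f(\sigma) \to f(\tau))$ and, being a poset isomorphism, carries acyclic matchings to acyclic matchings, so it induces an automorphism of $\M(K)$. Injectivity of $\Phi$ is immediate, since $f$ is determined by its action on the arrows emanating from the vertices of $K$. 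I would also record the source of the exceptional symmetries: any anti-automorphism $D$ of the face poset (an order-reversing bijection) sends an arrow $\sigma \to \tau$ to the arrow $D\tau \to D\sigma$ and reverses $V$-paths, hence it too preserves acyclic matchings and yields an automorphism of $\M(K)$ that does not lie in the image of $\Phi$.

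The heart of the argument is the reverse direction. The key combinatorial observation is that in a simplicial complex two distinct arrows can never form a closed $V$-path of length two, because two distinct cells cannot share two common facets. Consequently a pair of distinct arrows spans an edge of $\M(K)$ if and only if the arrows use no common cell, so the $1$-skeleton of $\M(K)$ is exactly the complement of the line graph $L(\HH_K)$: the arrows are the edges of $\HH_K$, and two arrows conflict precisely when they meet in a cell. Since a graph and its complement have the same automorphism group, Whitney's line-graph theorem identifies $\Aut$ of the $1$-skeleton of $\M(K)$ with $\Aut(\HH_K)$ (for all but a few small complexes, which I would treat by hand). Because an automorphism of $\M(K)$ is determined by its action on vertices, restriction gives an injection $\Aut(\M(K)) \hookrightarrow \Aut(\HH_K)$; conversely every poset automorphism and every poset anti-automorphism extends to $\M(K)$ by the construction above, so this injection is onto and $\Aut(\M(K)) \cong \Aut(\HH_K)$.

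It then remains to compute $\Aut(\HH_K)$. A graph automorphism of $\HH_K$ that preserves the rank function is exactly a face-poset automorphism, i.e.\ an element of $\Aut(K)$; the only possible extra automorphisms reverse or mix the ranks, and these exist precisely when the face poset admits a self-duality. I would therefore classify the connected complexes with self-dual face poset. In the graph case every edge-node of $\HH_K$ has degree two, so a rank-reversing automorphism forces every vertex-node to have degree two as well, whence $K$ is a single cycle $C_n$; here $\HH_K$ is literally $C_{2n}$ and $\Aut(\HH_K) = \Aut(C_{2n})$. In dimension at least two, self-duality forces every interval of the face poset to be Boolean in both directions, which identifies the poset with a truncated Boolean lattice and hence $K$ with $\pd^n$; the self-duality is complementation of faces, it commutes with and is disjoint from $\Aut(\pd^n) = S_{n+1}$, giving $\Aut(\HH_K) = \Aut(\pd^n) \times \mathbb{Z}_2$. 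In all remaining cases no self-duality exists and $\Aut(\HH_K) = \Aut(K)$.

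The main obstacle is this classification, namely proving that $C_n$ and $\pd^n$ are the only connected complexes whose Hasse diagram carries rank-reversing symmetries; this requires ruling out automorphisms that mix ranks without globally reversing them and pinning down the self-dual simplicial posets, and it is also where the small exceptional cases of Whitney's theorem, together with the coincidence $C_3 = \pd^2$ (where both formulas must agree), have to be checked directly. The remaining steps — the two extension lemmas and the degree/interval computations — are routine once this structural dichotomy is in place.
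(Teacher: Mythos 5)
Your architecture --- reduce everything to $\Aut(\HH(K))$ and then classify the Hasse-diagram symmetries --- overlaps with the paper's Proposition \ref{prop: aut(M(K))=aut(H(K))}, but you reach the reduction by a genuinely different and rather elegant route. Your observation that two distinct arrows in a simplicial complex can never form a closed $V$-path of length two (two distinct $p$-faces determine at most one common $(p+1)$-coface) is correct, and it does identify the $1$-skeleton of $\M(K)$ with the complement of the line graph of $\HH(K)$; Whitney/Sabidussi then gives $\Aut(\M(K)^{(1)})\cong\Aut(\HH(K))$, and since $\HH(K)$ is bipartite none of the small exceptional graphs arise. You are also right to route the injection through the $1$-skeleton and to argue surjectivity by extending poset automorphisms and anti-automorphisms separately: $\M(K)$ is not a flag complex (the three ``rotating'' arrows on $\pd^1$ form a hollow triangle), so an automorphism of the $1$-skeleton is not automatically one of $\M(K)$, and your write-up correctly avoids needing that. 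The paper instead builds the isomorphism directly from the bijection between edges of $\HH(K)$ and primitive vectors.

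The genuine gap is the step you yourself flag as ``the main obstacle'': the claim that every graph automorphism of $\HH(K)$ either preserves or globally reverses rank, and that a rank-reversing one exists only for $K=C_n$ and $K=\pd^n$. Without this, your surjectivity argument does not close (an element of $\Aut(\HH(K))$ that mixes ranks is not covered by either of your two extension lemmas), and the case split in the theorem does not follow. This is exactly the content that the paper does \emph{not} prove directly for general $K$: for $K\neq C_n,\pd^n$ it imports the reconstruction results of Capitelli and Minian (\cite[Theorem 3.5]{CM-17} and the proof of Theorem A), and it carries out the layer-by-layer analysis only for the highly symmetric complex $\pd^n$ (Lemma \ref{lem: order}), where uniform degree counts make it tractable. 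For an arbitrary connected complex the vertex degrees in $\HH(K)$ vary, ruling out rank-mixing automorphisms is not a routine degree argument, and your proposed mechanism for the self-duality classification (``Boolean intervals in both directions force a truncated Boolean lattice'') is itself a nontrivial assertion requiring proof, since the face poset has no top element and upper intervals of a general complex are not Boolean. So as written the proposal replaces the paper's citation with an unproved structural dichotomy; either prove that dichotomy or fall back on \cite{CM-17} as the paper does.
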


Here $\pd^n$ is the boundary of the $n$-simplex and $C_n$ is the cycle of length $n$. Theorem \ref{thm: main} is proved in three parts.  The first is Proposition \ref{prop: aut(M(K))=aut(K)} where we prove that $\Aut(\M(K))\cong \Aut(K)$ for $K\neq C_n, \pd^n.$ In Section \ref{Induced Maps}, we show that one can induce an automorphism on $\M(K)$ from an automorphism of $K$ . We then show that there is an injection of $\Aut(K)$ into $\Aut(\M(K))$ in Proposition \ref{prop: autK subgroup}. We are then able to show that $\Aut(\M(K))\cong \Aut(K)$ for $K\neq \pd^n, C_n$ by utilizing results of Capitelli and Minian \cite{CM-17}.  These results concern when we may pull an automorphism of the Morse complex back to an automorphism of the original complex.  The case where $K=C_n$ follows as a corollary of the more general fact that $\Aut(\M(K))\cong \Aut(\HH(K))$ where $\HH(K)$ is the Hasse diagram of $K$.  We establish this later isomorphism in Proposition \ref{prop: aut(M(K))=aut(H(K))} and prove that $\Aut(\M(C_n))\cong \Aut(C_{2n})$ in Proposition \ref{prop: aut C_n}.

The case where $K=\pd^n$ is Proposition \ref{prop: aut(M(K)) pt2}. In this case, there are automorphisms of the Morse complex which are not induced by a simplicial map on the original complex, called ghost automorphisms. We define what we call the reflection map $\pi$ which is a cosimplicial map in the sense that if $\alpha\subseteq \beta$, then $\pi(\alpha) \supseteq \pi(\beta)$. This cosimplicial map induces and then generates all the ghost automorphisms of the Morse complex of $\pd^n$. By studying these ghost automorphisms, we account for all automorphisms of $\M(\pd^n)$.

The outline of this paper is as follows:  Section \ref{sec: background} gives necessary background, terminology, and notation. Section \ref{sec: The Automorphism group} is the heart of the paper where we compute the automorphism group of $\M(K)$. In Section \ref{Induced Maps}, we show that any automorphism $K$ induces an automorphisms of $\M(K)$ so that there is an injective homomorphism $\Aut(K) \to \Aut(\M(K))$.  This injection turns out to be an isomorphism in the case where in the case where $K\neq C_n$ or $\pd^n$ by  Proposition \ref{prop: aut(M(K))=aut(K)}. We then turn to the cases $K=C_n$ and $K=\pd^n$.  We show that $\Aut(\M(C_n))\cong \Aut(C_{2n})$ in Section \ref{sec: The Morse complex of the Hasse diagram}.  This follows fairly easily from the fact that $\Aut(\M(K))\cong \Aut(\HH(K))$ (Proposition \ref{prop: aut(M(K))=aut(H(K))}). Finally, Section \ref{sec: Morse complex of pd} is devoted to computing the automorphism group of $\pd^n$ via the ghost automorphisms mentioned above.

\section{Background}\label{sec: background}
All our simplicial complexes are assumed to be abstract, finite, and connected simplicial complexes. Our reference for the basics of simplicial complexes is \cite{F-P11} or \cite{J-11}.

Let $n\geq 1$ be an integer, and write $[v_n]:=\{v_0, v_1, \ldots, v_n\}$. We use $K$ to denote a simplicial complex and $\alpha, \sigma$ etc. to denote a simplex of $K$. If $K$ is a simplicial complex on $n+1$ vertices, the set $V(K):=[v_n]$ is the \textbf{vertex set} of $K$ or the set of $0$-simplices of $K$.  We use $\sigma^{(i)}$ to denote a simplex of dimension $i$, and we write $\tau < \sigma^{(i)}$ to denote any face of $\sigma$ of dimension strictly less than $i$. The number $\dim(\sigma)-\dim(\tau)$ is called the  \textbf{codimension of $\tau$ with respect to $\sigma$}.

\begin{defn}
A \textbf{simplicial map} $f\colon K \to L$ is a function induced by a map on the vertex sets $f_V\colon V(K)\to V(L)$ with the property that if $\sigma=v_{i_0}v_{i_1}\ldots v_{i_m}$ is a simplex in $K$, then $f(\sigma):=f_V(v_{i_0})f_V(v_{i_1})\ldots f_V(v_{i_m})$ is a simplex of $L$.
\end{defn}

If $v_0, v_1, \ldots, v_n$ are all the vertices of a simplex $\sigma$, we will often use the notation $\sigma:=\prod\limits_{i=0}^n v_i$.

\begin{lem}\label{lem: comp}  If $f\colon A\to B$ and $g\colon B\to C$ are simplicial maps, then $(g\circ f)_V = g_V \circ f_V$.
\end{lem}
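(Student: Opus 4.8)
The plan is to unwind the definition of a simplicial map at the level of $0$-simplices. The essential observation is that for any simplicial map $h$, its vertex map $h_V$ can be recovered from $h$ by restriction to vertices: a vertex $v$ is the $0$-simplex $\{v\}$ (the case $m=0$ in the definition), so the defining property of $h$ gives $h(\{v\}) = \{h_V(v)\}$, and thus $h_V(v)$ is precisely the unique vertex comprising the image simplex $h(\{v\})$. Since a simplicial map is completely determined by its vertex map, it suffices to evaluate both $(g\circ f)_V$ and $g_V \circ f_V$ on an arbitrary vertex and check that they agree.

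First I would note that $g \circ f \colon A \to C$ is itself a simplicial map, so that the symbol $(g \circ f)_V$ is meaningful: for any simplex $\sigma = v_{i_0}\cdots v_{i_m}$ of $A$, the hypothesis on $f$ makes $f(\sigma)$ a simplex of $B$, and then the hypothesis on $g$ makes $g(f(\sigma))$ a simplex of $C$. Hence $g\circ f$ is induced by a vertex map, namely $(g\circ f)_V$.

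The computation itself is a direct substitution. Fix a vertex $v \in V(A)$. Applying the defining property of $f$ to the $0$-simplex $\{v\}$ yields $f(\{v\}) = \{f_V(v)\}$, and then applying the defining property of $g$ to the $0$-simplex $\{f_V(v)\}$ yields
$$
(g \circ f)(\{v\}) = g\bigl(\{f_V(v)\}\bigr) = \{g_V(f_V(v))\}.
$$
By the observation above, the left-hand side equals $\{(g\circ f)_V(v)\}$, so comparing the single vertices gives $(g \circ f)_V(v) = g_V(f_V(v)) = (g_V \circ f_V)(v)$. As $v$ was arbitrary, $(g \circ f)_V = g_V \circ f_V$.

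This statement carries no genuine obstacle; it is a routine verification. The only point requiring care is the identification of a vertex with its corresponding $0$-simplex, together with the resulting fact that restricting a simplicial map to $0$-simplices reproduces its vertex map. Once that identification is made explicit, the desired equality reduces to the associativity of ordinary function composition.
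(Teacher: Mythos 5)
Your proof is correct and follows essentially the same route as the paper's: both evaluate on an arbitrary vertex, use the fact that a simplicial map sends a $0$-simplex to a $0$-simplex so that the vertex map is recovered by restriction, and then chain the compositions. Your version is marginally more explicit about identifying a vertex with its singleton $0$-simplex and about checking that $g\circ f$ is simplicial, but there is no substantive difference.
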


\begin{proof}
Consider any $\sigma \in V(A)$. Then, $(g\circ f)_V(\sigma) = (g\circ f)(\sigma)$. Since $f,g$ are simplicial maps, we also know that $\dim f(\sigma) = \dim \sigma  =0$, so $f(\sigma) \in V(B)$. Likewise, we know $\dim f(\sigma) = \dim g(f(\sigma))$, so $g(f(\sigma)) \in V(C)$. Therefore, $(g\circ f)_V(\sigma) = (g\circ f)(\sigma) = (g \circ f_V)(\sigma) = (g_V\circ f_V)(\sigma),$ as desired.
\end{proof}

\begin{defn}
A simplicial map which is a bijection is a \textbf{simplicial isomorphism}, and if $f\colon K \to K$ is a simplicial isomorphism, we say that $f$ is a \textbf{simplicial automorphism}. The \textbf{automorphism group} of  $K$ is defined by $$\Aut(K):=\{f\colon K \to K \mid f \text{ is an automorphism}\}.$$
\end{defn}

Because we need to refer to them below, we define a cycle $C_n$ and the boundary of the $n$-simplex $\pd^n.$

\begin{defn}
Let $n \geq 3$ be an integer. Define the \textbf{cycle of length $n$}, denoted $C_n$, to be the $1$-dimensional simplicial complex (graph) with vertex set $V(C_n):=\{v_0, v_1, \ldots, v_{n-1}\}$ and edge set given by
$$\{v_0,v_1\}, \{v_1,v_2\}, \{v_2, v_3\}\ldots, \{v_{n-2}, v_{n-1}\}, \{v_{n-1},v_0\}.$$
\end{defn}

\begin{defn}
Let $n\geq 1$ be an integer.  The \textbf{boundary of the $n$-simplex}, denotes $\pd^n$ is the simplicial complex given by $\pd^n:=\mathcal{P}([v_n])- \{\emptyset, \{v_0,\ldots, v_{n}\}\}.$

\end{defn}

\subsection{The Morse complex}

In this section, we recall the basics of the Morse complex.  Our references for discrete Morse theory in general are \cite{F-02, KnudsonBook, KozlovBook} and the Morse complex in particular are \cite{CJ-2005, CM-17}.

\begin{defn} Let $K$ be a simplicial complex.  A \textbf{discrete vector field} $V$ on $K$ is defined by
$$V:=\{(\sigma^{(p)}, \tau^{(p+1)}) : \sigma< \tau, \text{ each simplex of } K \text{ is in at most one pair}\}.
$$
Any pair in $(\sigma,\tau)\in V$ is called a \textbf{regular pair}, and $\sigma, \tau$ are called \textbf{regular simplices} or just \textbf{regular}.  If $(\sigma^{(p)},\tau^{(p+1)})\in V$, we say that $p+1$ is the \textbf{index} of the regular pair. Any simplex in $K$ which is not in $V$ is called \textbf{critical}.
\end{defn}

\begin{defn}
Let $V$ be a discrete vector field on a simplicial complex $K$.  A \textbf{$V$-path} is a sequence of simplices $$\alpha^{(p)}_0, \beta^{(p+1)}_0, \alpha^{(p)}_1, \beta^{(p+1)}_1, \alpha^{(p)}_2\ldots , \beta^{(p+1)}_{k-1}, \alpha^{(p)}_{k}$$ of $K$ such that $(\alpha^{(p)}_i,\beta^{(p+1)}_i)\in V$ and $\beta^{(p+1)}_i>\alpha_{i+1}^{(p)}\neq \alpha_{i}^{(p)}$ for $0\leq i\leq k-1$. If $k\neq 0$, then the $V$-path is called  \textbf{non-trivial.}
A $V$-path is said to be  \textbf{closed} if $\alpha_{k}^{(p)}=\alpha_0^{(p)}$.
A discrete vector field $V$ which contains no  non-trivial closed $V$-paths is called a \textbf{gradient vector field}.
\end{defn}

\begin{ex}\label{thatExampleV}  An example of a gradient vector field is given on a triangulation of the M\"{o}bius band below:

$$
\begin{tikzpicture}[
    decoration={markings,mark=at position 0.6 with {\arrow{triangle 60}}},
    ]

\filldraw[fill=black!30, draw=black] (0,0)--(6,0)--(6,2)--(0,2)--cycle;

\node[inner sep=1pt, circle, fill=black] (a) at (0,0) [draw] {};
\node[inner sep=1pt, circle, fill=black] (b) at (2,0) [draw] {};
\node[inner sep=1pt, circle, fill=black] (c) at (4,0) [draw] {};
\node[inner sep=1pt, circle, fill=black] (d) at (6,0) [draw] {};
\node[inner sep=1pt, circle, fill=black] (e) at (0,2) [draw] {};
\node[inner sep=1pt, circle, fill=black] (f) at (2,2) [draw] {};
\node[inner sep=1pt, circle, fill=black] (g) at (4,2) [draw] {};
\node[inner sep=1pt, circle, fill=black] (h) at (6,2) [draw] {};

\draw[->-]  (b)--(a) node[midway] {};
\draw[->-]  (a)--(e) node[midway] {};
\draw[-]  (a)--(f) node[midway] {};
\draw[->-]  (c)--(b) node[midway] {};
\draw[-]  (b)--(g) node[midway] {};
\draw[-]  (b)--(f) node[midway] {};
\draw[-]  (c)--(d) node[midway] {};
\draw[-]  (c)--(h) node[midway] {};
\draw[-]  (c)--(g) node[midway] {};
\draw[->-]  (h)--(d) node[midway] {};
\draw[-]  (e)--(f) node[midway] {};
\draw[->-]  (f)--(g) node[midway] {};
\draw[->-]  (g)--(h) node[midway] {};

\draw[-triangle 60]  (1,1)--(.5,1.5);
\draw[-triangle 60]  (2,.5)--(1,.5);
\draw[-triangle 60]  (3,1)--(2.5,1.5);
\draw[-triangle 60]  (4,.5)--(3,.5);
\draw[-triangle 60]  (5,1)--(4.5,1.5);
\draw[-triangle 60]  (5.5,0)--(5.5,1);

\node[anchor = north ]  at (a) {};
\node[anchor = north ]  at (b) {};
\node[anchor = north ]  at (c) {};
\node[anchor = north ]  at (d) {};
\node[anchor = south ]  at (e) {};
\node[anchor = south ]  at (f) {};
\node[anchor = south ]  at (g) {};
\node[anchor = south ]  at (h) {};

\node[anchor = east ]  at (a) {{$v_0$}};
\node[anchor = west ]  at (d) {{$v_3$}};
\node[anchor = east ]  at (e) {{$v_3$}};
\node[anchor = west ]  at (h) {{$v_0$}};

\end{tikzpicture}
$$

Another gradient vector field on the  M\"{o}bius band is

$$
\begin{tikzpicture}[
    decoration={markings,mark=at position 0.6 with {\arrow{triangle 60}}},
    ]

\filldraw[fill=black!30, draw=black] (0,0)--(6,0)--(6,2)--(0,2)--cycle;

\node[inner sep=1pt, circle, fill=black] (a) at (0,0) [draw] {};
\node[inner sep=1pt, circle, fill=black] (b) at (2,0) [draw] {};
\node[inner sep=1pt, circle, fill=black] (c) at (4,0) [draw] {};
\node[inner sep=1pt, circle, fill=black] (d) at (6,0) [draw] {};
\node[inner sep=1pt, circle, fill=black] (e) at (0,2) [draw] {};
\node[inner sep=1pt, circle, fill=black] (f) at (2,2) [draw] {};
\node[inner sep=1pt, circle, fill=black] (g) at (4,2) [draw] {};
\node[inner sep=1pt, circle, fill=black] (h) at (6,2) [draw] {};

\draw[->-]  (b)--(a) node[midway] {};
\draw[-]  (a)--(e) node[midway] {};
\draw[-]  (a)--(f) node[midway] {};
\draw[->-]  (c)--(b) node[midway] {};
\draw[-]  (b)--(g) node[midway] {};
\draw[-]  (b)--(f) node[midway] {};
\draw[-]  (c)--(d) node[midway] {};
\draw[-]  (c)--(h) node[midway] {};
\draw[-]  (c)--(g) node[midway] {};
\draw[-]  (h)--(d) node[midway] {};
\draw[->-]  (e)--(f) node[midway] {};
\draw[-]  (f)--(g) node[midway] {};
\draw[->-]  (g)--(h) node[midway] {};

\draw[-triangle 60]  (1,1)--(.5,1.5);
\draw[-triangle 60]  (3,1)--(2.5,1.5);
\draw[-triangle 60]  (3.5,0)--(3.5,1);
\draw[-triangle 60]  (5,1)--(5.5,.5);

\node[anchor = north ]  at (a) {};
\node[anchor = north ]  at (b) {};
\node[anchor = north ]  at (c) {};
\node[anchor = north ]  at (d) {};
\node[anchor = south ]  at (e) {};
\node[anchor = south ]  at (f) {};
\node[anchor = south ]  at (g) {};
\node[anchor = south ]  at (h) {};

\node[anchor = east ]  at (a) {{$v_0$}};
\node[anchor = west ]  at (d) {{$v_3$}};
\node[anchor = east ]  at (e) {{$v_3$}};
\node[anchor = west ]  at (h) {{$v_0$}};

\end{tikzpicture}
$$

\end{ex}

If a gradient vector field $V$ has only one regular pair, we call $V$ \textbf{primitive}. Given multiple primitive gradient vector fields, we may sometimes combine them to form a new gradient vector field.  This will be accomplished ``overlaying" all the arrows of each primitive gradient vector field.  Clearly such a construction may or may not yield a gradient vector field.

\begin{ex}\label{ex: s1} Let primitive gradient vector fields  $V_0,V_1,V_2$ be given by

$$
\begin{tikzpicture}[
    decoration={markings,mark=at position 0.6 with {\arrow{triangle 60}}},
    ]

\node[inner sep=1pt, circle, fill=black] (1) at (0,0) [draw] {};
\node[inner sep=1pt, circle, fill=black] (2) at (1,0) [draw] {};
\node[inner sep=1pt, circle, fill=black] (4) at (0,1) [draw] {};
\node[inner sep=1pt, circle, fill=black] (3) at (1,1) [draw] {};

\node[inner sep=1pt, circle, fill=black] (a) at (3,0) [draw] {};
\node[inner sep=1pt, circle, fill=black] (b) at (4,0) [draw] {};
\node[inner sep=1pt, circle, fill=black] (d) at (3,1) [draw] {};
\node[inner sep=1pt, circle, fill=black] (c) at (4,1) [draw] {};

\node[inner sep=1pt, circle, fill=black] (e) at (6,0) [draw] {};
\node[inner sep=1pt, circle, fill=black] (f) at (7,0) [draw] {};
\node[inner sep=1pt, circle, fill=black] (h) at (6,1) [draw] {};
\node[inner sep=1pt, circle, fill=black] (g) at (7,1) [draw] {};

\draw[-]  (1)--(2);
\draw[-]  (2)--(3);
\draw[->-]  (4)--(3);
\draw[-]  (4)--(1);

\draw[->-]  (a)--(b);
\draw[-]  (b)--(c);
\draw[-]  (c)--(d);
\draw[-]  (d)--(a);

\draw[-]  (e)--(f);
\draw[-]  (f)--(g);
\draw[-]  (g)--(h);
\draw[->-]  (e)--(h);

\end{tikzpicture}
$$
respectively.  Then $V_0,V_1$ combine to form a new gradient vector field $V$
$$
\begin{tikzpicture}[
    decoration={markings,mark=at position 0.6 with {\arrow{triangle 60}}},
    ]

\node[inner sep=1pt, circle, fill=black] (1) at (0,0) [draw] {};
\node[inner sep=1pt, circle, fill=black] (2) at (2,0) [draw] {};
\node[inner sep=1pt, circle, fill=black] (4) at (0,2) [draw] {};
\node[inner sep=1pt, circle, fill=black] (3) at (2,2) [draw] {};

\draw[->-]  (1)--(2);
\draw[-]  (2)--(3);
\draw[->-]  (4)--(3);
\draw[-]  (4)--(1);

\end{tikzpicture}
$$
but clearly combining $V_1$ and $V_2$
$$
\begin{tikzpicture}[
    decoration={markings,mark=at position 0.6 with {\arrow{triangle 60}}},
    ]

\node[inner sep=1pt, circle, fill=black] (1) at (0,0) [draw] {};
\node[inner sep=1pt, circle, fill=black] (2) at (2,0) [draw] {};
\node[inner sep=1pt, circle, fill=black] (4) at (0,2) [draw] {};
\node[inner sep=1pt, circle, fill=black] (3) at (2,2) [draw] {};

\draw[->-]  (1)--(2);
\draw[-]  (2)--(3);
\draw[-]  (4)--(3);
\draw[->-]  (1)--(4);

\end{tikzpicture}
$$
is not a gradient vector field since the bottom left vertex is in two pairs of $V$.
\end{ex}

If $V,W$ are two gradient vector fields, write $V\leq W$ whenever the regular pairs of $V$ are also regular pairs of $W$.  In general, we say that a collection of primitive vector fields $V_0, V_1, \ldots, V_n$ is \textbf{compatible} if there exists a gradient vector field $V$ such that $V_i\leq V$ for all $0\leq i\leq n$.

\begin{defn}\label{MorseComplexDef2}  The  \textbf{Morse complex} of $K$, denote $\mathcal{M}(K)$, is the simplicial complex whose vertices are given by primitive gradient vector fields and whose $n$-simplices are given by gradient vector fields with $n+1$ regular pairs.  A gradient vector field $V$ is then associated with all primitive gradient vector fields $V:=\{V_0, \ldots, V_n\}$ with $V_i\leq V$ for all $0\leq i\leq n$.
\end{defn}

\begin{ex} Let $K=\pd^1=C_3$ be the simplicial complex given by

$$
\begin{tikzpicture}[scale=1]

\node[inner sep=1pt, circle, fill=black](a) at (0,2) {};
\node[inner sep=1pt, circle, fill=black](b) at (1,0) {};
\node[inner sep=1pt, circle, fill=black](c) at (-1,0) {};

\draw[-]  (a)--(b);
\draw[-]  (a)--(c);
\draw[-]  (b)--(c);

\node[anchor = south]  at (a) {\small{$a$}};
\node[anchor =  west]  at (b) {\small{$b$}};
\node[anchor = east]  at (c) {\small{$c$}};

\end{tikzpicture}
$$

Here we adopt the convention that if $(x,xy)$ is a primitive vector field consisting of a vertex and edge, we denote this by $xy$.  Note that this notation only works for a primitive vector of index $1$. Then one checks that the Morse complex $\M(K)$ is given by:

$$
\begin{tikzpicture}[scale=1]

\node[inner sep=1pt, circle, fill=black](a) at (0,2) {};
\node[inner sep=1pt, circle, fill=black](b) at (1,0) {};
\node[inner sep=1pt, circle, fill=black](c) at (-1,0) {};
\node[inner sep=1pt, circle, fill=black](d) at (0,-1) {};
\node[inner sep=1pt, circle, fill=black](e) at (1,-3) {};
\node[inner sep=1pt, circle, fill=black](f) at (-1,-3) {};

\draw[-]  (a)--(b);
\draw[-]  (a)--(c);
\draw[-] (a)--(d);
\draw [white, line width=1.5mm] (b) -- (c) node[midway, left] {};
\draw[-]  (b)--(c);

\draw[-]  (d)--(e);
\draw[-]  (d)--(f);
\draw[-]  (e)--(f);

\draw[-] (b)--(e);
\draw[-] (c)--(f);

\node[anchor = east]  at (a) {\small{$ab$}};
\node[anchor =  west]  at (b) {\small{$bc$}};
\node[anchor = east]  at (c) {\small{$ca$}};
\node[anchor = east]  at (d) {\small{$cb$}};
\node[anchor =  west]  at (e) {\small{$ac$}};
\node[anchor = east]  at (f) {\small{$ba$}};

\end{tikzpicture}
$$

\end{ex}

\section{The Automorphism group of $\M(K)$}\label{sec: The Automorphism group}

This section is devoted to computing the automorphism group of $\M(K)$. We first show in section \ref{Induced Maps} that certain automorphisms of $K$ induce automorphisms of $\M(K)$.  It will then follow by Proposition \ref{prop: autK subgroup} that $\Aut(K)$ is isomorphic to a subgroup of $\Aut(\M(K))$.  We show in Proposition \ref{prop: aut(M(K))=aut(K)} that $\Aut(K)\cong \Aut(\M(K))$ in the case where $K\neq C_n$ or $\pd^n$. The next two sections compute $\Aut(\M(K))$ in the two excluded cases.

\subsection{Induced maps on $\M(K)$}\label{Induced Maps}

If $f_V\colon V(K) \to V(L)$ is a  bijection on the vertex sets and induces a simplicial map $f\colon K\to L$, then $f$ is an isomorphism. We are interested in isomorphisms from $K$ to $K$, i.e., automorphisms of $K$. Given an automorphism of $K$, we now define an induced automorphism on $\M(K)$.


\begin{defn}
Let  $f\colon K\to K$ be an automorphism.  Define the \textbf{induced automorphism on the Morse complex}  $f_{\ast_V}\colon V(\mathcal{M}(K)) \to V(\mathcal{M}(K))$ by  $f_{\ast_V}(v) = (f(\sigma), f(\tau))$ where $v = (\sigma^{(p)}, \tau^{(p+1)}) \in V(\mathcal{M}(K))$.
\end{defn}

We then extend $f_{\ast_V}$ to a simplicial map on all of $\M(K)$. Below we will justify our claims that this yields a well-defined automorphism of $\M(K)$.

\begin{prop}\label{prop: primitive to primitive} If $v =(\sigma^{(p)}, \tau^{(p+1)})  \in V(\mathcal{M}(K))$, then $f_{\ast_V}(v) \in V(\mathcal{M}(K))$.
\end{prop}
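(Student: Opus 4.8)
The plan is to show that applying an automorphism $f$ of $K$ to a primitive gradient vector field again yields a primitive gradient vector field, i.e., that $f_{\ast_V}(v)$ is a vertex of $\M(K)$. Recall that a vertex $v = (\sigma^{(p)}, \tau^{(p+1)})$ of $\M(K)$ is precisely a single regular pair $\sigma < \tau$ with $\dim \tau = \dim \sigma + 1$, and that such a single-pair discrete vector field is automatically a gradient vector field (it admits no nontrivial $V$-paths, since a nontrivial path requires returning to a different $p$-simplex paired by $V$, impossible with only one pair). So $f_{\ast_V}(v) = (f(\sigma), f(\tau))$ will be a vertex of $\M(K)$ exactly when it is again a single valid regular pair, and the task reduces to verifying the defining incidence and dimension conditions.

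First I would check that $f(\sigma)$ and $f(\tau)$ are genuine simplices of $K$: this is immediate since $f$ is an automorphism, hence a simplicial map, so it sends simplices to simplices. Next I would verify the dimension condition $\dim f(\tau) = \dim f(\sigma) + 1$. Because $f$ is a bijective simplicial map, it preserves dimension of simplices (an automorphism restricts to a bijection on the vertex set and cannot collapse distinct vertices of a simplex, so $\dim f(\sigma) = \dim \sigma$ for every $\sigma$); thus $\dim f(\sigma) = p$ and $\dim f(\tau) = p+1$, giving the required codimension-one relation. Finally I would verify the face relation $f(\sigma) < f(\tau)$: since $\sigma < \tau$ means the vertex set of $\sigma$ is properly contained in that of $\tau$, applying the vertex map $f_V$ preserves this containment, and by the dimension count it stays proper. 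Hence $(f(\sigma), f(\tau))$ satisfies all the conditions of a regular pair.

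The main point to get right, which is the only place any real content enters, is the dimension-preservation claim $\dim f(\sigma) = \dim \sigma$; I would justify it by noting that an automorphism is in particular injective on vertices (indeed a bijection on $V(K)$), so the $p+1$ distinct vertices of $\sigma$ map to $p+1$ distinct vertices of $f(\sigma)$, forcing $\dim f(\sigma) = p$. Everything else is a routine unwinding of the definitions of regular pair and of the induced vertex map $f_{\ast_V}$. I expect no genuine obstacle here; the proposition is essentially the observation that automorphisms preserve the combinatorial data (dimension and incidence) that defines a primitive gradient vector field, so the content is confirming that the image of a single-arrow gradient field is again a single-arrow gradient field.
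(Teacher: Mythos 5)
Your proof is correct and follows essentially the same route as the paper: you verify that the bijectivity of $f_V$ preserves dimension, so $\dim f(\sigma) = \dim f(\tau) - 1$, and that the face relation $\sigma < \tau$ is carried to $f(\sigma) < f(\tau)$, whence $(f(\sigma), f(\tau))$ is again a primitive vector. The extra remark that a single regular pair admits no nontrivial closed $V$-path is a harmless addition that the paper leaves implicit.
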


\begin{proof}
We seek to show that $(f(\sigma), f(\tau))$ is a primitive vector of $K$. Since $f_V$ is a bijection, we know that $\dim f(\sigma) = \dim \sigma$ and $\dim f(\tau) = \dim \tau$. Hence we have $\dim f(\sigma) = \dim\sigma = \dim\tau-1 =\dim f(\tau) - 1$. Since $\sigma \subseteq \tau$, $f(\sigma)\subseteq f(\tau)$ so that $(f(\sigma), f(\tau))$ is primitive and $(f(\sigma), f(\tau))\in V(\M(K)).$
\end{proof}

\begin{prop}\label{prop: f_* simplicial}
Let $f\colon K \to K$ be a simplicial automorphism. Then the induced map $f_{\ast}$ is simplicial.
\end{prop}

\begin{proof}
Suppose for the sake of contradiction that $f_{\ast}\colon \mathcal{M}(K)\to \mathcal{M}(K)$ is not a simplicial map. Then there is some $\alpha = v_{i_0}v_{i_1}\cdots v_{i_m}\in \mathcal{M}(K)$ such that $f_{\ast} (\alpha) = f_{\ast_V}(v_{i_0}) f_{\ast_V}(v_{i_1})\cdots  f_{\ast_V}(v_{i_m})\not\in \mathcal{M}(K)$.  This implies that the induced function $f_\ast(\alpha)$ either does not induce a gradient vector field (i.e. two vertices in $f_\ast(\alpha)$ are not compatible), or it induces a discrete vector field that contains a nontrivial closed $V$-path.\\

\textbf{Case 1:} Suppose for the sake of contradiction that there are at least two distinct simplices, say $v_{i_j} = (\sigma_j, \tau_j)$ and $v_{i_k} = (\sigma_k, \tau_k)$, which are incompatible. This implies that exactly one of the following holds:

\begin{enumerate}
\item[(1)] $f(\sigma_i) = f(\sigma_j)$
\item[(2)]$f(\tau_i) = f(\tau_j)$
\item[(3)]$f(\tau_j) = f(\sigma_k)$
\item[(4)]$f(\sigma_j) = f(\tau_k)$.
\end{enumerate}

Suppose $(1)$ is true. Since $f$ is an automorphism, this implies $\sigma_i = \sigma_j$. However, since $\tau_i \neq \tau_j$, we cannot simultaneously have both $(\sigma_i, \tau_i)$ and $(\sigma_i, \tau_j)$, so this would imply $\alpha \not\in \mathcal{M}(K)$, a contradiction. The cases in which $(2)$, $(3)$, or $(4)$ are true lead to similar contradictions, again due to the fact that $f$ is a simplicial automorphism.\\

\textbf{Case 2:} Suppose for the sake of contradiction that $f_\ast(\alpha)$ contains a nontrivial closed V-path. Then, $f_\ast(\alpha)$ contains some vertices $$f_\ast(\sigma_0, \tau_0), f_\ast(\sigma_1, \tau_1), f_\ast(\sigma_2, \tau_2), \dots , f_\ast(\sigma_{k}, \tau_{k})$$ $$ = (f(\sigma_0), f(\tau_0)), (f(\sigma_1), f(\tau_1)), (f(\sigma_2), f(\tau_2)), \dots , (f(\sigma_{k-1}), f(\tau_{k-1}))$$ such that $f(\sigma_0), f(\tau_0), f(\sigma_1), f(\tau_1), \dots , f(\sigma_{k-1}), f(\tau_{k-1}), f(\sigma_0)$ is a closed V-path, in which $f(\sigma_i) < f(\tau_{i-1})$ for $0 \leq i \leq k-1$ (where the indices are taken mod $k$). Since $f$ is an automorphism, it follows that $\sigma_0, \tau_0, \sigma_1, \tau_1, \dots , \sigma_{k-1}, \tau_{k-1}, \sigma_0$ must also be a nontrivial closed V-path. Since $(\sigma_0, \tau_0), (\sigma_1, \tau_1), \dots , (\sigma_{k-1},\tau_{k-1})$ are vertices in $\alpha$, it follows that $\alpha \not\in \M(K)$, a contradiction. Therefore, $f_\ast(\alpha) \in \M(K)$, so $f_{\ast}$ must be a simplicial map.
\end{proof}

\begin{ex}
We give an example to show that if the simplicial map $f\colon K \to K$ is not an automorphism, then $f_*$ is not necessarily a simplicial map.  Indeed, consider the simplicial complex $K = \Delta^2$ with vertex set $\{a,b,c\}$. Define $f_V\colon V(K)\to V(K)$ by $f_V(v) = a$ for all vertices $v$. It is easy to verify that $f$ is a simplicial map. Let $f_\ast\colon \M(K) \to \M(K)$ be induced by $f$. Notice that $(a,ab) \in \M(K)$, but $f_\ast((a,ab)) = (f(a), f(ab)) = (a,a) \not\in \M(K)$. In order to avoid these ``degenerate" primitive vectors, we must impose the constraint that $f$ be an isomorphism.

\end{ex}

We now show that a simplicial automorphism on a simplicial complex $K$ gives rise to a simplicial automorphism on $\M(K)$.

\begin{prop}\label{prop: induced auto} Let $f\colon K\to K$ be a simplicial automorphism. Then the induced map $f_{\ast}\colon \mathcal{M}(K) \to \mathcal{M}(K)$ is a simplicial automorphism.
\end{prop}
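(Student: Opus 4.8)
The plan is to show that $f_{\ast}$ is a bijection on $\M(K)$ and that both $f_{\ast}$ and its inverse are simplicial, so that $f_{\ast}$ is a simplicial automorphism. Proposition \ref{prop: f_* simplicial} already establishes that $f_{\ast}$ is simplicial, so the remaining work is to exhibit a simplicial inverse. The natural candidate is $(f^{-1})_{\ast}$, the map induced by the inverse automorphism $f^{-1}\colon K\to K$, which exists and is itself a simplicial automorphism of $K$ since $f$ is a bijective simplicial map.

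First I would observe that $f^{-1}$ is a simplicial automorphism of $K$, so by Proposition \ref{prop: f_* simplicial} the induced map $(f^{-1})_{\ast}\colon \M(K)\to\M(K)$ is also simplicial, and by Proposition \ref{prop: primitive to primitive} it sends vertices of $\M(K)$ to vertices of $\M(K)$. Next I would verify on vertices that $(f^{-1})_{\ast_V}$ is a two-sided inverse of $f_{\ast_V}$. For a vertex $v=(\sigma^{(p)},\tau^{(p+1)})\in V(\M(K))$ we have
$$
(f^{-1})_{\ast_V}\bigl(f_{\ast_V}(v)\bigr)=(f^{-1})_{\ast_V}\bigl((f(\sigma),f(\tau))\bigr)=\bigl(f^{-1}(f(\sigma)),\,f^{-1}(f(\tau))\bigr)=(\sigma,\tau)=v,
$$
using Lemma \ref{lem: comp} to identify $f^{-1}\circ f$ with the identity on simplices, and symmetrically $f_{\ast_V}\circ(f^{-1})_{\ast_V}=\id$ on vertices. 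Hence $f_{\ast_V}$ is a bijection on the vertex set of $\M(K)$.

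Finally I would extend this from vertices to all of $\M(K)$. Since $f_{\ast}$ is simplicial (Proposition \ref{prop: f_* simplicial}) and is induced by the vertex bijection $f_{\ast_V}$, and since its inverse $(f^{-1})_{\ast}$ is likewise a simplicial map induced by the inverse vertex bijection, the map $f_{\ast}$ is a bijective simplicial map whose inverse is simplicial; that is, $f_{\ast}$ is a simplicial automorphism of $\M(K)$. The only subtlety, and the step I expect to require the most care, is confirming that $f_{\ast}$ is genuinely a \emph{bijection on simplices} and not merely on vertices: one must note that because $f_{\ast}$ is simplicial it carries a gradient vector field $V=\{v_{i_0},\dots,v_{i_m}\}$ to the simplex spanned by $f_{\ast_V}(v_{i_0}),\dots,f_{\ast_V}(v_{i_m})$, and the vertex-level inversion above together with the fact that $(f^{-1})_{\ast}$ is simplicial guarantees this assignment is invertible simplex-by-simplex, so no collapsing of simplices can occur.
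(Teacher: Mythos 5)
Your proof is correct and follows essentially the same route as the paper: both reduce to Proposition \ref{prop: f_* simplicial} for simpliciality and then use the inverse automorphism $f^{-1}$ of $K$ to show $f_{\ast_V}$ is a bijection on $V(\M(K))$. The only cosmetic difference is that you exhibit $(f^{-1})_{\ast}$ as an explicit two-sided simplicial inverse, whereas the paper just checks surjectivity of $f_{\ast_V}$ and invokes finiteness of the vertex set.
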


\begin{proof} We know by Proposition \ref{prop: f_* simplicial} that $f_*$ is simplicial.  It thus suffices to show that $f_{\ast_V}\colon V(\mathcal{M}(K)) \to V(\mathcal{M}(K))$ is a bijection. Consider any $(\sigma, \tau)  \in V(\mathcal{M}(K))$. We know that $f$ is a simplicial isomorphism, so it has an inverse $g\colon K \to K$. Thus, $(g(\sigma), g(\tau)) \in V(\mathcal{M}(K))$, as an argument similar to that in Proposition \ref{prop: primitive to primitive} verifies that $(g(\sigma), g(\tau))$ is primitive. Hence, $f_{\ast_V}((g(\sigma), g(\tau))) = (f\circ g(\sigma), f\circ g(\tau)) = (\sigma, \tau)$. Therefore, $f_{\ast_V}$ is surjective. Since $V(\mathcal{M}(K))$ is finite, this implies $f_{\ast_V}$ is bijective.
\end{proof}

We can then show that the induced map respects composition.

\begin{lem} \label{lem: induced comp} Let $f,g\colon K\to K$ be simplicial automorphisms with induced automorphisms $f_\ast, g_\ast \colon \M(K)\to \M(K)$, respectively. Then $(f\circ g)_\ast = f_\ast \circ g_\ast$.
\end{lem}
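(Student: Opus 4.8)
The plan is to prove the statement by verifying that the two simplicial automorphisms $(f\circ g)_\ast$ and $f_\ast \circ g_\ast$ agree on the vertex set $V(\M(K))$, since a simplicial map is completely determined by its action on vertices. Because each of $f$, $g$, and $f\circ g$ is a simplicial automorphism of $K$ (the composite of automorphisms is again an automorphism), Propositions \ref{prop: f_* simplicial} and \ref{prop: induced auto} guarantee that $f_\ast$, $g_\ast$, and $(f\circ g)_\ast$ are all well-defined simplicial automorphisms of $\M(K)$; so both sides of the claimed equation are genuine simplicial maps, and it suffices to check the equality $(f\circ g)_{\ast_V} = (f_\ast \circ g_\ast)_V$ on vertices.

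First I would take an arbitrary vertex $v = (\sigma^{(p)}, \tau^{(p+1)}) \in V(\M(K))$ and unwind both sides directly from the definition of the induced map. On one side, by definition of the induced automorphism applied to the composite, $(f\circ g)_{\ast_V}(v) = \bigl((f\circ g)(\sigma),\, (f\circ g)(\tau)\bigr)$. On the other side, I would compute $g_{\ast_V}(v) = (g(\sigma), g(\tau))$ first, note that this is again a vertex of $\M(K)$ by Proposition \ref{prop: primitive to primitive}, and then apply $f_{\ast_V}$ to obtain $f_{\ast_V}(g_{\ast_V}(v)) = \bigl(f(g(\sigma)),\, f(g(\tau))\bigr)$.

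The heart of the argument is then to identify $(f\circ g)(\sigma)$ with $f(g(\sigma))$ and similarly for $\tau$, i.e.\ to see that the simplicial composite behaves as an honest function composite on each simplex. This is where I would invoke Lemma \ref{lem: comp}, which tells us $(f\circ g)_V = f_V \circ g_V$ on the underlying vertex maps; since a simplicial map sends a simplex to the simplex spanned by the images of its vertices, this vertex-level identity propagates to $(f\circ g)(\sigma) = f(g(\sigma))$ for every simplex $\sigma$. Chaining these equalities gives $(f\circ g)_{\ast_V}(v) = \bigl(f(g(\sigma)), f(g(\tau))\bigr) = (f_\ast \circ g_\ast)_V(v)$, and since $v$ was arbitrary the two induced maps agree on all of $V(\M(K))$, hence as simplicial maps.

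I expect no serious obstacle here; the proof is essentially bookkeeping. The only subtlety worth flagging is the need to confirm that the intermediate object $g_{\ast_V}(v)$ really is a vertex of $\M(K)$ before applying $f_{\ast_V}$ to it, so that the composition $f_\ast \circ g_\ast$ is literally defined at $v$ — but this is exactly what Proposition \ref{prop: primitive to primitive} supplies. The other point to be careful about is to apply Lemma \ref{lem: comp} at the level of simplices rather than merely vertices, since the primitive vectors $\sigma$ and $\tau$ are in general higher-dimensional faces; invoking the definition of a simplicial map (image of a simplex is spanned by images of its vertices) bridges this gap cleanly.
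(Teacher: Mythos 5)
Your proposal is correct and follows essentially the same route as the paper: both reduce the claim to an equality of vertex maps on $V(\M(K))$, unwind the definition of the induced map on a primitive vector $(\sigma,\tau)$, and invoke Lemma \ref{lem: comp} to handle the compatibility of composition with the passage to vertex maps. The only cosmetic difference is where Lemma \ref{lem: comp} is applied — you use it on $K$ to justify $(f\circ g)(\sigma)=f(g(\sigma))$, while the paper applies it on $\M(K)$ to identify $f_{\ast_V}\circ g_{\ast_V}$ with $(f_\ast\circ g_\ast)_V$ — but the substance is identical.
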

\begin{proof}
It suffices to show that $(f\circ g)_{\ast_V} = (f_{\ast} \circ g_{\ast})_V$. Consider any $(\sigma, \tau) \in \M(K)$. We have
\begin{align*}
(f\circ g)_{\ast_V}((\sigma,\tau)) &= (f\circ g(\sigma), f\circ g(\tau))\\
&= f_{\ast_V}(g(\sigma), g(\tau))\\
&= (f_{\ast_V} \circ g_{\ast_V})((\sigma, \tau)).
\end{align*}
Hence, $(f\circ g)_{\ast_V} = f_{\ast_V}\circ g_{\ast_V}$. By Lemma \ref{lem: comp}, we know that $f_{\ast_V} \circ g_{\ast_V} = (f_{\ast} \circ g_{\ast})_V $, so $(f\circ g)_{\ast_V} = (f_{\ast} \circ g_{\ast})_V$.

\end{proof}

Because automorphisms of $K$ induce automorphisms of $\M(K)$, we next show that we are able to obtain not only elements of $\Aut(\M(K))$, but that $\Aut(K)$ actually corresponds to a subgroup of $\Aut(\M(K)).$

\begin{prop}\label{prop: autK subgroup}
The group $\text{Aut}(K)$ is isomorphic to a subgroup of $\text{Aut}(\mathcal{M}(K))$. In particular, there is an injective homomorphism $\phi\colon \Aut(K)\to \Aut(\M(K)).$
\end{prop}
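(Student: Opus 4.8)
The plan is to construct the homomorphism $\phi$ explicitly using the induced-map machinery already developed, and then verify the three properties required: that $\phi$ is well-defined (lands in $\Aut(\M(K))$), that it is a homomorphism, and that it is injective. I would define $\phi(f) := f_{\ast}$ for each $f \in \Aut(K)$. That $\phi(f) = f_{\ast}$ is genuinely an element of $\Aut(\M(K))$ is exactly the content of Proposition \ref{prop: induced auto}, so this step is immediate. That $\phi$ is a homomorphism, i.e. $\phi(f \circ g) = \phi(f) \circ \phi(g)$, is precisely Lemma \ref{lem: induced comp}, which states $(f\circ g)_{\ast} = f_{\ast} \circ g_{\ast}$. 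So the bulk of the statement reduces to results already in hand, and the only genuinely new work is establishing injectivity.

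For injectivity, I would show that $\Ker(\phi)$ is trivial. Suppose $\phi(f) = f_{\ast} = \id_{\M(K)}$; I want to conclude that $f = \id_K$. The idea is that $f$ is completely recoverable from its action on primitive gradient vector fields, because primitive vectors of index $1$ encode the edges of $K$ together with their endpoints. Concretely, fix any vertex $v_j \in V(K)$. Since $K$ is connected (and has at least one edge, as $n \geq 1$), there is an edge $v_j v_k$ containing $v_j$, giving a primitive vector $(v_j, v_j v_k) \in V(\M(K))$. If $f_{\ast} = \id$, then $f_{\ast_V}(v_j, v_j v_k) = (f(v_j), f(v_j v_k)) = (v_j, v_j v_k)$, which forces $f(v_j) = v_j$. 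Since $v_j$ was arbitrary, $f$ fixes every vertex, hence $f_V = \id$ and therefore $f = \id_K$. This shows $\Ker(\phi)$ is trivial, so $\phi$ is injective.

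Having an injective homomorphism $\phi\colon \Aut(K) \to \Aut(\M(K))$ immediately yields that $\Aut(K)$ is isomorphic to its image $\phi(\Aut(K))$, which is a subgroup of $\Aut(\M(K))$; this is the standard first-isomorphism-theorem consequence and completes the proposition.

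I expect the main obstacle to be the injectivity step, and specifically the need to invoke connectedness (and the standing assumption that the complex is not a single point) to guarantee that every vertex lies in some edge, so that the recovery argument above goes through. One subtlety worth flagging: the argument only directly recovers $f$ on vertices, so I would be careful to note that a simplicial automorphism is determined by $f_V$ (by definition of simplicial map), which is why fixing all vertices forces $f = \id_K$. The homomorphism and well-definedness steps are essentially citations to the preceding lemma and proposition and should require no new computation.
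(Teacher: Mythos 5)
Your proposal is correct and follows essentially the same route as the paper: define $\phi(f)=f_{\ast}$, verify the homomorphism property (the paper recomputes it directly on an arbitrary simplex where you cite Lemma \ref{lem: induced comp}, which amounts to the same thing), and show the kernel is trivial. Your injectivity step is in fact slightly more explicit than the paper's, which only asserts that $\prod_i(f(\sigma_i),f(\tau_i))=\prod_i(\sigma_i,\tau_i)$ for every simplex forces $f=\id_K$; your observation that connectedness places every vertex in an edge, hence in a primitive vector of index $1$ from which $f(v_j)=v_j$ can be read off, supplies the detail the paper leaves implicit.
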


\begin{proof}
Consider the function $\phi\colon \text{Aut}(K) \to \text{Aut}(\mathcal{M}(K))$ that sends each simplicial automorphism $f \in \text{Aut}(K)$ to its induced simplicial automorphism $f_{\ast}\in \text{Aut}(\mathcal{M}(K))$. We first show that $\phi$ is a homomorphism. Consider any $f,g \in \text{Aut}(K)$, and an arbitrary $\sigma = \prod_{i=0}^m(\sigma_i, \tau_i) \in \mathcal{M}(K)$ where each $(\sigma_i,\tau_i)$ is a vertex of $\sigma$. Since $f,g\in \Aut(K)$, $f\circ g\in \Aut(K)$. This induces $(f\circ g)_{\ast}\colon \mathcal{M}(K) \to \mathcal{M}(K)$. Notice that

\begin{align*}
\phi(f\circ g)(\sigma)  &=(f\circ g)_{\ast}(\sigma) \\
&=  \prod_{i=0}^m (f\circ g)_{\ast_V}((\sigma_i, \tau_i))  \\
&= \prod_{i=0}^m((f\circ g)(\sigma_i), (f\circ g)(\tau_i)).
\end{align*}

On the other hand, we have

\begin{align*}
(\phi(f) \circ \phi(g))(\sigma) &= (f_\ast \circ g_\ast)(\sigma) \\
&= f_\ast\left(\prod_{i=0}^mg_{\ast_V}((\sigma_i, \tau_i)) \right) \\
&= f_\ast\left(\prod_{i=0}^m(g(\sigma_i), g(\tau_i))\right) \\
&= \prod_{i=0}^m((f\circ g)(\sigma_i), (f\circ g)(\tau_i))
\end{align*}

Thus $\phi(f \circ g) = \phi(f)\circ \phi(g)$.\\

We now need to show that $\phi$ is injective. Consider any $f \in \Ker(\phi)$. We claim that $f = \id_K$. Again, consider an arbitrary $\sigma = \prod_{i=0}^m(\sigma_i, \tau_i) \in \mathcal{M}(K)$. Notice $f$ induces a simplicial map $f_\ast \colon \M(K) \to \M(K)$. Since $f \in \Ker(\phi)$, we have
\begin{align*}
\phi(f)(\sigma) &= \sigma\\
f_\ast (\sigma) &= \sigma\\
\prod_{i=0}^m f_{\ast_V}((\sigma_i,\tau_i)) &= \prod_{i=0}^m (\sigma_i,\tau_i)\\
\prod_{i=0}^m (f(\sigma_i),f(\tau_i)) &= \prod_{i=0}^m (\sigma_i,\tau_i).
\end{align*}
As this holds for any choice of $\sigma$, we conclude that $f = \id_K$. Since $\Ker(\phi)$ is trivial, it follows that $\phi$ is injective.

\end{proof}

Proposition \ref{prop: autK subgroup} guarantees that $\Aut(K)\leq \Aut(\M(K))$ by inducing an automorphism of $\M(K)$ from an automorphism of $K$. Thus if every automorphism of $\M(K)$ is induced by an automorphism of $K$, then $\Aut(\M(K))=\Aut(K).$  We will first show that this is indeed the case for $K\neq C_n, \pd^n$.


\begin{prop}\label{prop: aut(M(K))=aut(K)}  If $K$ is a simplicial complex other than $C_n$ or $\pd^n$, then $\Aut(\M(K)) \cong \Aut(K)$ .
\end{prop}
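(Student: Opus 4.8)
The plan is to combine the injection already in hand with a reconstruction of $K$ from $\M(K)$. By Proposition \ref{prop: autK subgroup} the map $\phi\colon \Aut(K)\to \Aut(\M(K))$, $f\mapsto f_\ast$, is an injective homomorphism, so it suffices to prove that $\phi$ is surjective whenever $K\neq C_n,\pd^n$; that is, every $g\in\Aut(\M(K))$ equals $f_\ast$ for some $f\in\Aut(K)$. Equivalently, I would produce a two-sided inverse $\psi\colon \Aut(\M(K))\to \Aut(K)$ by showing that an automorphism of $\M(K)$ canonically determines an automorphism of $K$, and the existence of such a canonical assignment is exactly the content of the reconstruction phenomenon.

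The heart of the argument is the reconstruction results of Capitelli and Minian \cite{CM-17}. Recall that the vertices of $\M(K)$ are the primitive gradient vector fields $(\sigma^{(p)},\tau^{(p+1)})$, i.e.\ the covering relations $\sigma\lessdot\tau$ of the face poset of $K$, and that (since $K$ is a genuine simplicial complex, so that no two distinct primitive vectors can close up into a nontrivial $V$-path) two such vertices are joined by an edge in $\M(K)$ exactly when the corresponding pairs share no simplex. The idea is that this adjacency data records enough of the Hasse diagram of $K$ to recover the simplices of $K$ together with their face relations, provided $K\neq C_n,\pd^n$. I would invoke their reconstruction in the following form: there is a procedure, depending only on the isomorphism type of $\M(K)$, that identifies which collections of vertices of $\M(K)$ arise from a common simplex of $K$, and thereby recovers $K$. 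Because the procedure is intrinsic to $\M(K)$, any $g\in\Aut(\M(K))$ permutes the vertices of $\M(K)$ while preserving adjacency, hence preserves this recovered structure, and so descends to a bijection $f\colon K\to K$ on simplices that respects the face relation, i.e.\ $f\in\Aut(K)$.

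It then remains to check that $f_\ast=g$. Since $f$ is defined so that it acts on the simplices of $K$ exactly as $g$ acts on the associated primitive vectors, the induced map $f_\ast$ agrees with $g$ on every vertex $(\sigma,\tau)$ of $\M(K)$; as both maps are simplicial, they agree on all of $\M(K)$. This gives $\phi(f)=f_\ast=g$, so $\phi$ is onto, and combined with the injectivity from Proposition \ref{prop: autK subgroup} we conclude $\Aut(\M(K))\cong\Aut(K)$.

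The main obstacle I anticipate is the reconstruction step and, in particular, verifying that it is natural with respect to automorphisms and pinning down exactly why $C_n$ and $\pd^n$ must be excluded. Both excluded families are precisely the cases where recovering $K$ from the adjacency data of $\M(K)$ is ambiguous: for $\pd^n$ there is an extra ``reflection''-type symmetry producing ghost automorphisms not induced from $K$, and for $C_n$ the Hasse diagram carries additional symmetry, so that $\Aut(\M(C_n))$ is strictly larger than $\Aut(C_n)$ (indeed isomorphic to $\Aut(C_{2n})$). Consequently the cleanest route is to isolate the precise intrinsic characterization of the simplices of $K$ inside $\M(K)$ supplied by Capitelli and Minian, to check that this characterization is preserved by every automorphism of $\M(K)$, and to confirm that their hypotheses amount to exactly the exclusion of $C_n$ and $\pd^n$.
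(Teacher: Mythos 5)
Your proposal follows essentially the same route as the paper: both take the injection $\phi\colon\Aut(K)\to\Aut(\M(K))$ from Proposition \ref{prop: autK subgroup} and reduce the whole matter to surjectivity, which is obtained by invoking Capitelli and Minian's reconstruction of $K$ from $\M(K)$ (the paper cites their Theorem 3.5 for graphs other than $C_n$ and the construction in the proof of their Theorem A, whose hypothesis fails exactly for $\pd^n$, for dimension at least $2$). The paper is no more self-contained than you are here --- it likewise defers the naturality of the reconstruction to \cite{CM-17} --- so your outline matches its proof in substance.
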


\begin{proof}
As in Proposition \ref{prop: autK subgroup}, define a function $\phi\colon \text{Aut}(K) \to \text{Aut}(\mathcal{M}(K))$ that sends each simplicial automorphism $f \in \text{Aut}(K)$ to its induced simplicial automorphism $f_{\ast}\in \text{Aut}(\mathcal{M}(K))$. We know that $\phi$ is an injective homomorphism by Proposition \ref{prop: autK subgroup}.

We now show that there is a surjection onto $\Aut(\M(K))$.  If so, since $\M(K)$ and $K$ are finite, this implies that $\phi$ is an isomorphism.  Let $F\in \Aut(\M(K)).$  If $K$ is a 1-dimensional simplicial complex (i.e., a graph) other than $C_n$, then $F$ is induced by a simplicial isomorphism $f\colon K\to K$ by \cite[Theorem 3.5]{CM-17}.  Thus the result for $K$ a graph other than $C_n$.

Now suppose that $K\neq \pd^n$ is a simplicial complex of dimension greater than or equal to $2$, and let $F\in \Aut(\M(K)).$ In the proof of Theorem A \cite{CM-17}, Capitelli and Minian construct a simplicial isomorphism $f\colon K \to K$ that induces the given $F\colon \M(K)\to \M(K).$  Their construction of $f$ relies on a condition that is not satisfied for $K=\pd^n$ according to the contrapositive of \cite[Theorem 4.2]{CM-17}. Thus the result for $K\neq \pd^n$ a simplicial complex of dimension greater than $2$.
\end{proof}


\subsection{The Morse complex of the Hasse diagram}\label{sec: The Morse complex of the Hasse diagram}

In this section, we will show that computing the automorphism group of $\M(K)$ is equivalent to computing the automorphism group of $\HH(K)$.  We will then immediately be able to compute the Morse complex of $C_n$, one of the two cases excluded in Proposition \ref{prop: aut(M(K))=aut(K)}.  Section \ref{sec: Morse complex of pd} is then devoted to computing the Morse complex of our final special case, $K=\pd^n.$  We briefly recall here the definition of the Hasse diagram.

\begin{defn}
The \textbf{Hasse diagram} of $K$, denoted $\mathcal{H}_K$ or $\mathcal{H}$, is defined as the partially ordered set of simplices of $K$ ordered by the face relations. We view $\HH$ as a graph.
\end{defn}

We adopt the convention that if $\sigma^{(p)}<\tau^{(p+1)}$ are two nodes of the Hasse diagram, the edge joining them is denoted $\sigma\tau$.

\begin{prop} \label{prop: aut(M(K))=aut(H(K))}  For any simplicial complex $K$, $\Aut(\M(K)) \cong \Aut(\Hasse(K))$.
\end{prop}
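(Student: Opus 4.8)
The plan is to build an explicit isomorphism $\Theta\colon \Aut(\HH(K)) \to \Aut(\M(K))$ and to identify its inverse. The starting point is the observation, immediate from Definition \ref{MorseComplexDef2}, that a primitive gradient vector field $(\sigma^{(p)},\tau^{(p+1)})$ is nothing but a covering relation $\sigma < \tau$ in the poset of simplices, i.e.\ an edge of the Hasse diagram. Thus the vertex set of $\M(K)$ is canonically the edge set of $\HH(K)$, and a simplex of $\M(K)$ (a gradient vector field with several regular pairs) is exactly a matching of $\HH(K)$ whose arrows carry no nontrivial closed $V$-path. All of the work is in relating automorphisms of the complex $\M(K)$, which act on edges of $\HH(K)$, to automorphisms of $\HH(K)$, which act on its nodes.

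First I would isolate the combinatorial heart, a disjointness lemma: two distinct primitive gradient vector fields span a $1$-simplex of $\M(K)$ if and only if the corresponding edges of $\HH(K)$ are disjoint. One direction is immediate, since two pairs sharing a simplex cannot both belong to a discrete vector field. For the converse I would show that two disjoint edges can never form a nontrivial closed $V$-path: the only such path has length two, and it would force two distinct $p$-simplices to be common facets of the same two $(p+1)$-simplices, which a count of vertices rules out. Consequently the $1$-skeleton of $\M(K)$ is precisely the graph on the edges of $\HH(K)$ in which two edges are adjacent exactly when they are disjoint, that is, the complement of the line graph $L(\HH(K))$.

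Next I would define $\Theta$. A graph automorphism $g$ of $\HH(K)$ carries edges to edges, hence permutes the vertices of $\M(K)$; I would check that this permutation sends gradient vector fields to gradient vector fields and so extends to a simplicial automorphism $\Theta(g)$, and that $\Theta$ is an injective homomorphism (injectivity because $K$ connected forces $\HH(K)$ connected, so a $g$ fixing every edge setwise fixes every node). For surjectivity, take $F \in \Aut(\M(K))$. Being simplicial, $F$ preserves the $1$-skeleton, so by the disjointness lemma it preserves the relation ``shares a node,'' i.e.\ $F$ is an automorphism of $L(\HH(K))$. Since consecutive ranks have opposite parity, $\HH(K)$ is bipartite and therefore triangle-free, and a short argument shows a Hasse diagram is never $K_{1,3}$; these are exactly the hypotheses under which Whitney's theorem guarantees that $F$ is induced by a unique node-automorphism $g$ of $\HH(K)$. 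Then $\Theta(g)$ and $F$ agree on vertices of $\M(K)$, hence as simplicial maps, so $\Theta(g)=F$ and $\Theta$ is onto.

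The hard part will be the well-definedness of $\Theta$: verifying that a graph automorphism of $\HH(K)$ carries \emph{acyclic} matchings to acyclic matchings. The difficulty is genuine, because a closed $V$-path is confined to two consecutive ranks, a condition stated in terms of the grading that a bare graph automorphism need not respect; indeed one can exhibit gradient vector fields on $\pd^3$ whose arrows lie in a single graph cycle spanning three ranks, so ``acyclic'' is strictly finer than ``free of alternating cycles.'' I would resolve this by first establishing that every automorphism of the connected Hasse diagram of a simplicial complex either preserves the rank or reverses it, exploiting the rigidity that each $p$-simplex has exactly $p+1$ facets to pin the grading down up to a global reversal. A rank-preserving automorphism then permutes the rank strata, while a rank-reversing one is a poset anti-isomorphism; since the no-closed-$V$-path condition is symmetric under reversing all face relations, acyclicity survives in either case. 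This rank dichotomy is also the conceptual source of the statement: it is precisely the rank-reversing automorphisms that account for the ``extra'' symmetries one sees through the identification $\HH(C_n)=C_{2n}$, so settling it is both the technical obstacle and the crux of why the isomorphism holds.
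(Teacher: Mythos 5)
Your overall strategy coincides with the paper's: both proofs transport automorphisms along the canonical identification of $V(\M(K))$ with $E(\HH(K))$, and your disjointness lemma (two regular pairs span a $1$-simplex of $\M(K)$ iff the corresponding edges of $\HH(K)$ are disjoint, since a two-pair closed $V$-path would force two distinct $p$-simplices to be common facets of two distinct $(p+1)$-simplices) is correct and matches the paper's compatibility analysis. Where you genuinely diverge is surjectivity. The paper simply sets $f = m^{-1}\circ g\circ m$ and calls it an automorphism of $\HH(K)$, but that formula only defines a permutation of the \emph{edges} of $\HH(K)$; the paper never checks that it comes from a permutation of the nodes. Your route through the complement of the line graph and Whitney's theorem (applicable because $\HH(K)$ is connected, bipartite, hence triangle-free, and not $K_2$) supplies exactly that missing step, at the cost of importing a classical theorem.

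Your last paragraph puts its finger on a real issue that the paper's proof passes over in silence. The no-closed-$V$-path condition is not a purely graph-theoretic property of a matching in $\HH(K)$: your claim about three-rank alternating cycles is correct --- on $\pd^3$ the gradient field $\{(a,ad),(ac,acd)\}$ has both of its arrows on the $4$-cycle $a,\,ad,\,acd,\,ac$, which spans ranks $0,1,2$. The paper implicitly assumes your rank dichotomy when it splits into the two cases ``$m(f_V(\sigma_i)f_V(\tau_i)) = (f_V(\sigma_i),f_V(\tau_i))$ for all $i$'' and ``$=(f_V(\tau_i),f_V(\sigma_i))$ for all $i$,'' and again when it asserts that a closed $V$-path in the image ``can be pulled back.'' So your plan is the more honest one, but the dichotomy --- every automorphism of $\HH(K)$ preserves rank or globally reverses it --- is the one substantive claim you leave unproved, and it is not a one-liner: the paper proves it only for $\pd^n$ (Lemma \ref{lem: order}), by a degree count followed by induction on layers. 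For general $K$ you will need an analogous rigidity argument; a usable handle is that every $4$-cycle of $\HH(K)$ is a diamond $\alpha<\{\beta,\beta'\}<\gamma$ spanning three consecutive ranks (two-rank $4$-cycles are excluded by the same vertex count as in your disjointness lemma), combined with the fact that a rank-$p$ node has exactly $p+1$ lower neighbours. Until that lemma is written down, what you have is a correct and well-aimed plan rather than a complete proof.
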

\begin{proof}

We will construct an isomorphism $\phi\colon \Aut(\Hasse(K)) \to \Aut(\M(K))$. Consider an arbitrary automorphism $f \in \Aut(\Hasse(K)) $. Define a function \\ $m \colon E(\Hasse(K)) \to V(\M(K))$ by $m(\sigma\tau) = (\sigma, \tau)$, where $\sigma$ is a codimension $1$ face of $\tau$. Notice that $m$ has an inverse $m^{-1} \colon V(\M(K)) \to E(\Hasse(K))$ given by $m^{-1}(\sigma, \tau)=\sigma\tau$.
Define a function $g_V\colon V(\M(K)) \to V(\M(K))$ by $g_V  := m\circ f \circ m^{-1}$. Clearly  $g_V((\sigma,\tau))$ is another vertex of $V(\M(K))$. Since $f$ is a simplicial isomorphism, it has an inverse $f^{-1} \colon \Hasse(K) \to \Hasse(K)$. It is then clear that $g_V$ has an inverse $g_V^{-1} := m^{-1} \circ f^{-1} \circ m$. Hence $g_V$ is a bijection.\\
We now show that $g$ is simplicial.  Consider any $\sigma  := (\sigma_1, \tau_1)(\sigma_2,\tau_2)\cdots (\sigma_k,\tau_k)\in \M(K)$. Observe that
\begin{align*}
g(\sigma) &= (m\circ f\circ m^{-1})(\sigma) \\
&= (m\circ f)\left( \prod_{i=1}^k \sigma_i\tau_i\right)\\
&= m\left( \prod_{i=1}^k f_V(\sigma_i) f_V(\tau_i)\right)\\
&= \prod_{i=1}^k m(f_V(\sigma_i)f_V(\tau_i)).
\end{align*}

Suppose that $\prod_{i=1}^k m(f_V(\sigma_i)f_V(\tau_i))= \prod_{i=1}^k (f_V(\sigma_i), f_V(\tau_i))$ as the case $\prod_{i=1}^k m(f_V(\sigma_i)f_V(\tau_i))= \prod_{i=1}^k (f_V(\tau_i), f_V(\sigma_i))$ is identical. We claim that each $(f_V(\sigma_i), f_V(\tau_i))$ is a primitive vector in $\M(K)$.  Since $\sigma_i\tau_i \in \HH(K)$, by the construction of $\HH(K)$, we know that either $\sigma_i$ is a codimension $1$ face of $\tau_i$, or vice versa, and since $f$ is an automorphism, $f_V(\sigma_i)f_V(\tau_i) \in \HH(K)$. Again, by construction of $\HH(K)$, we know that either $f_V(\sigma_i)$ is a codimension $1$ face of $f_V(\tau_i)$, or vice versa. Thus, applying $m$ to $f_V(\sigma_i)f_V(\tau_i)$ will form a primitive vector $(f_V(\sigma_i)f_V(\tau_i))\in \M(K)$. \\
We now must show that all primitive vectors in $g(\sigma) = \prod_{i=1}^k (f_V(\sigma_i), f_V(\tau_i))$ are compatible. Suppose for the sake of contradiction they are not all compatible. The first possibility is that there are two compatible primitive vectors $(\sigma_k,\tau_k),(\sigma_{\ell}, \tau_{\ell}) \in V(\M(K))$ such that

$$(f_V(\sigma_k),f_V(\tau_k))(f_V(\sigma_{\ell}),f_V(\tau_{\ell})) \not\in \M(K).$$
However, this implies that $\{f_V(\sigma_k),f_V(\tau_k)\} \cap \{f_V(\sigma_{\ell}),f_V(\tau_{\ell})\} \neq \emptyset$. Since $f_V$ is bijective, this implies that $\{\sigma_k,\tau_k\} \cap \{\sigma_{\ell}, \tau_{\ell}\} \neq \emptyset$, so that $(\sigma_k,\tau_k),(\sigma_{\ell}, \tau_{\ell})$ are not compatible, which is a contradiction. The second possibility is that $g(\sigma)$ contains a nontrivial closed $V$-path.  In a similar manner to the first case, this nontrivial closed $V$-path in the image of $g$ can be pulled back to obtain a nontrivial closed $V$-path in $\sigma$, a contradiction.  We conclude that $g(\sigma)\in \M(K)$ so that $g$ is a simplicial map.


Since $g_V$ is a bijection and $g$ is simplicial, it follows that $g$ is a simplicial automorphism on $\M(K)$. Define $\phi(f) := g$. We first show that $\phi$ is a homomorphism. Suppose we have $a, b \in \Aut(\Hasse(K))$. We seek to show that $\phi(a\circ b) = \phi(a)\circ \phi(b)$. By the definition of a simplicial map, it suffices to show that $\phi(a\circ b)_V = (\phi(a)\circ \phi(b))_V$. We have
\begin{eqnarray*}
\phi(a\circ b)_V&=& m\circ a\circ b \circ m^{-1}\\
&=& (m \circ a \circ m^{-1}) \circ (m \circ b \circ m^{-1})\\
&=& \phi(a)_V \circ \phi(b)_V.
\end{eqnarray*}
By Lemma \ref{lem: comp}, we know that $\phi(a\circ b)_V = \phi(a)_V \circ \phi(b)_V = (\phi(a)\circ \phi(b))_V$, as desired.\\

To see that $\phi$ is injective, suppose  that $\phi(a) = \phi(b)$. Then we have $m \circ a \circ m^{-1} =  m\circ b \circ m^{-1}$ which implies that $a=b$. \\

Finally, we show that $\phi$ is surjective. For any $g \in \Aut(\M(K))$, $g$ must be induced by some $g_V: V(\M(K)) \to V(\M(K))$. Then construct an $f \colon \Hasse(K)\to \Hasse(K)$ defined by $f = m^{-1} \circ g \circ m$. We see that $\phi(f)  = m\circ (m^{-1} \circ g \circ m) \circ m^{-1} = g$, as desired. Therefore, we conclude that $\Aut(\Hasse(K)) \cong \Aut(\M(K))$.

\end{proof}

We now are able to easily compute the automorphism group of $C_n$.

\begin{prop}\label{prop: aut C_n} If $K = C_n$, then then $\Aut(\M(C_n)) \cong \Aut(C_{2n}) $.
\end{prop}

\begin{proof}
It suffices to show $\Aut(\Hasse(C_n)) \cong \Aut(C_{2n})$.
We construct the Hasse diagram of $C_n$:

$$
\begin{tikzpicture}[scale=1.7]

\node[inner sep=1pt, circle, fill=black](a1) at (0,0) {};
\node[inner sep=1pt, circle, fill=black](a2) at (1,0) {};
\node[inner sep=1pt, circle, fill=black](a3) at (2,0) {};
\node[inner sep=1pt, circle, fill=black](a4) at (4,0) {};
\node[inner sep=1pt, circle, fill=black](ak) at (5,0) {};
\node at (2.7,0) {\ldots};

\node[inner sep=1pt, circle, fill=black](a1a2) at (0,1) {};
\node[inner sep=1pt, circle, fill=black](a2a3) at (1,1) {};
\node[inner sep=1pt, circle, fill=black](a3a4) at (2,1) {};
\node[inner sep=1pt, circle, fill=black](a4ak) at (4,1) {};
\node[inner sep=1pt, circle, fill=black](a1ak) at (5,1) {};
\node at (2.6,1) {\ldots};
\node at (2.9,0.6) {$\ddots$};

\draw[-]  (a1)--(a1a2); \draw[-] (a1)--(2.6, 0.45); \draw[-] (3.2, 0.65)--(a1ak);
\draw[-] (a2)--(a1a2); \draw[-] (a2)--(a2a3);
\draw[-] (a3)--(a2a3); \draw[-] (a3)--(a3a4);
\draw[-] (ak)--(a1ak);
\draw[-] (a4)--(a4ak);
\draw[-] (ak)--(a4ak);
\draw[-] (a3a4)--(2.4,0.7);
\draw[-] (a4)--(3.5,0.3);

\node[anchor = east]  at (a1) {\small{$a_1$}};
\node[anchor =  east]  at (a2) {\small{$a_2$}};
\node[anchor = east]  at (a3) {\small{$a_3$}};
\node[anchor = east]  at (a4) {\small{$a_{n-1}$}};
\node[anchor = east]  at (ak) {\small{$a_n$}};

\node[anchor = east]  at (a1a2) {\small{$a_1a_2$}};
\node[anchor = south]  at (a1ak) {\small{$a_1a_n$}};
\node[anchor =  east]  at (a2a3) {\small{$a_2a_3$}};
\node[anchor = east]  at (a3a4) {\small{$a_3a_4$}};
\node[anchor = east]  at (a4ak) {\small{$a_{n-1}a_n$}};

\end{tikzpicture}
$$

It is clear that this Hasse Diagram can be redrawn as:
$$
\begin{tikzpicture}

\node[regular polygon,regular polygon sides=13,minimum size=5cm,draw] (a){};
\foreach \x in {9,...,13}{\node[circle,fill,inner sep=1pt] at (a.corner \x) {};}
\foreach \x in {1,...,5}{\node[circle,fill,inner sep=1pt] at (a.corner \x) {};}

\draw [line width=1.1cm, white] (1.1,-2) -- (-2.3,-2);
\node at (-1.71,-1.55) {$\ddots$};
\node at (0.85,-2.17) {$\dots$};
\node[anchor = south]  at (a.corner 1) {\small{$a_1$}};
\node[anchor = east]  at (a.corner 2) {\small{$a_1a_n$}};
\node[anchor = east]  at (a.corner 3) {\small{$a_n$}};
\node[anchor = east]  at (a.corner 4) {\small{$a_{n-1}a_n$}};
\node[anchor = east]  at (a.corner 5) {\small{$a_{n-1}$}};

\node[anchor =  west]  at (a.corner 12) {\small{$a_2$}};
\node[anchor = west]  at (a.corner 10) {\small{$a_3$}};

\node[anchor = west]  at (a.corner 13) {\small{$a_1a_2$}};

\node[anchor =  west]  at (a.corner 11) {\small{$a_2a_3$}};
\node[anchor = west]  at (a.corner 9) {\small{$a_3a_4$}};

\end{tikzpicture}
$$
which is $C_{2n}$. Therefore, $\Aut(\HH(C_n)) \cong \Aut(C_{2n})$.

\end{proof}

\begin{rem} Let $D_{n}$ be the dihedral group of order $n$. It is well known that $D_{2n}\cong D_n \times \mathbb{Z}_2$ for $n$ odd.  Since $\Aut(C_n) \cong D_{2n}$, we have $\Aut(\M(C_n)) \cong \Aut(C_{2n}) \cong \Aut(C_n) \times \mathbb{Z}_2$ whenever $n$ is odd, yielding the same formula as the automorphism group of $\pd^n$ that we show in Section \ref{sec: Morse complex of pd}.
\end{rem}

\subsection{The Morse complex of $\partial\Delta^n$}\label{sec: Morse complex of pd}

We now investigate the case where $K=\pd^n$.  In this case, as in the $K=C_n$ case, there are automorphisms of the Morse complex which are not induced by an automorphism of the original complex.  While these automorphisms of the Morse complex are not induced by simplicial maps, we will show below that they are induced by what we are calling the reflection map. This is not a simplicial map, but rather a ``cosimplicial map," a term we define in Definition \ref{defn: cosimplicial}. The automorphisms induced by the simplicial maps and those induced by this cosimplicial map will then be shown to generate all possible automorphisms of the Morse complex, allowing us to compute and $\Aut(\M(\pd^n)$ in Theorem \ref{prop: aut(M(K)) pt2}.  To illustrate, we first look at an example.

\begin{ex}\label{ex: delta1}  Let $K=\pd^1=C_3$, and recall that we computed the Morse complex of $K$ in Example \ref{ex: s1}. For reference, we give the Morse complex here, noting again the convention that $ab$ is shorthand for the primitive vector $(a,ab)$.

$$
\begin{tikzpicture}[scale=1]

\node[inner sep=1pt, circle, fill=black](a) at (0,2) {};
\node[inner sep=1pt, circle, fill=black](b) at (1,0) {};
\node[inner sep=1pt, circle, fill=black](c) at (-1,0) {};
\node[inner sep=1pt, circle, fill=black](d) at (0,-1) {};
\node[inner sep=1pt, circle, fill=black](e) at (1,-3) {};
\node[inner sep=1pt, circle, fill=black](f) at (-1,-3) {};

\draw[-]  (a)--(b);
\draw[-]  (a)--(c);
\draw[-] (a)--(d);
\draw [white, line width=1.5mm] (b) -- (c) node[midway, left] {};
\draw[-]  (b)--(c);

\draw[-]  (d)--(e);
\draw[-]  (d)--(f);
\draw[-]  (e)--(f);

\draw[-] (b)--(e);
\draw[-] (c)--(f);

\node[anchor = east]  at (a) {\small{$ab$}};
\node[anchor =  west]  at (b) {\small{$bc$}};
\node[anchor = east]  at (c) {\small{$ca$}};
\node[anchor = east]  at (d) {\small{$cb$}};
\node[anchor =  west]  at (e) {\small{$ac$}};
\node[anchor = east]  at (f) {\small{$ba$}};

\end{tikzpicture}
$$
We see that $\Aut(\pd^1)$ is the symmetries of a triangle, and has six automorphisms. Meanwhile, $\Aut(\M(\pd^1))$ has twelve automorphisms. Six of the automorphisms of $\M(\pd^1)$ arise from automorphisms of $\pd^n$, but the other six do not. For example, define $F\colon \M(\pd^1)\to \M(\pd^1)$ by
\begin{eqnarray*}
F(ab)&=&cb\\
F(ba)&=&ca\\
F(ca)&=&ba\\
F(cb)&=&ab\\
F(ac)&=&bc\\
F(bc)&=&ac.
\end{eqnarray*}
Then it is easy to see that $F$ is a simplicial automorphism.  However, it is not induced by any automorphism of $\pd^1$.  Furthermore, composition of $F$ with any automorphism induced by an automorphism of $\pd^1$ yields a new automorphism of $\M(\pd^1)$ that is not induced by an automorphism of $\pd^1$.  For example, if $f\colon \pd^1\to \pd^1$ is given by $f(a)=a, f(b)=c$, and $f(c)=b$, then $F\circ f_*$ is an automorphism of $\M(\pd^1)$ which is not equal to any automorphism induced from $\pd^1$.
\end{ex}

The map $F$ in Example \ref{ex: delta1} is what in general we call $\pi_*$, the induced map of the reflection map $\pi_n \colon \pd^n\to \pd^n$ given below in Definition \ref{defn: reflection map}. We will show in Lemma \ref{lem: ghost2} that $\pi_*$ is a simplicial automorphism of $\M(\pd^n)$ and that it generates all the ``missing" automorphisms of $\M(\pd^n)$ in Theorem \ref{prop: aut(M(K)) pt2}.

\begin{defn}\label{defn: reflection map} Let $\pd^{n}$ be the boundary of the $n$-simplex on the vertices $\{v_0, v_1, \ldots, v_n\}$ and write $\delta:=v_0v_1\cdots v_{n}$.  Define the \textbf{reflection map} $\pi_n=\pi\colon \pd^n \to \pd^n$ by $\pi(\sigma) := \delta - \sigma $.
\end{defn}

The reflection map is a cosimplicial map in the following sense:

\begin{defn}\label{defn: cosimplicial}
Let $K$ be a simplicial complex, $f\colon K \to K$ a function such that if $\sigma$ is a simplex in $K$, $f(\sigma)$ is a simplex of $K$.  Then $f$ is called a \textbf{cosimplicial map} if whenever $\tau\subseteq \sigma$, then $f(\tau)\supseteq f(\sigma).$ If in addition $f$ is a bijection on the simplices of $K$, we say that $f$ is a \textbf{cosimplicial automorphism}.
\end{defn}

\begin{ex}  We now illustrate how the reflection map $K=\pd^2$ gives rise to an automorphism on the Hasse diagram of $\partial\Delta^2$.

$$
\begin{tikzpicture}[scale=1.7]

\node[inner sep=1pt, circle, fill=black](a) at (0,0) {};
\node[inner sep=1pt, circle, fill=black](b) at (1,0) {};
\node[inner sep=1pt, circle, fill=black](c) at (2,0) {};
\node[inner sep=1pt, circle, fill=black](d) at (3,0) {};

\node[inner sep=1pt, circle, fill=black](ab) at (-1,1) {};
\node[inner sep=1pt, circle, fill=black](ac) at (0,1) {};
\node[inner sep=1pt, circle, fill=black](ad) at (1,1) {};
\node[inner sep=1pt, circle, fill=black](bc) at (2,1) {};
\node[inner sep=1pt, circle, fill=black](bd) at (3,1) {};
\node[inner sep=1pt, circle, fill=black](cd) at (4,1) {};
\node[inner sep=1pt, circle, fill=black](bcd) at (3,2) {};
\node[inner sep=1pt, circle, fill=black](acd) at (2,2) {};
\node[inner sep=1pt, circle, fill=black](abd) at (1,2) {};
\node[inner sep=1pt, circle, fill=black](abc) at (0,2) {};

\draw[-]  (a)--(ab); \draw[-]  (a)--(ac); \draw[-]  (a)--(ad);
\draw[-]  (b)--(ab); \draw[-]  (b)--(bc); \draw[-]  (b)--(bd);
\draw[-]  (c)--(ac); \draw[-]  (c)--(bc); \draw[-]  (c)--(cd);
\draw[-]  (d)--(ad); \draw[-]  (d)--(bd); \draw[-]  (d)--(cd);

\draw[-]  (bcd)--(bc); \draw[-]  (bcd)--(bd); \draw[-]  (bcd)--(cd);
\draw[-]  (acd)--(ac); \draw[-]  (acd)--(ad); \draw[-]  (acd)--(cd);
\draw[-]  (abd)--(ab); \draw[-]  (abd)--(ad); \draw[-]  (abd)--(bd);
\draw[-]  (abc)--(ab); \draw[-]  (abc)--(ac); \draw[-]  (abc)--(bc);

\node[anchor = east]  at (a) {\small{$a$}};
\node[anchor =  east]  at (b) {\small{$b$}};
\node[anchor = east]  at (c) {\small{$c$}};
\node[anchor = east]  at (d) {\small{$d$}};

\node[anchor = east]  at (ab) {\small{$ab$}};
\node[anchor = east]  at (ad) {\small{$ad$}};
\node[anchor = east]  at (ac) {\small{$ac$}};
\node[anchor =  east]  at (bc) {\small{$bc$}};
\node[anchor =  east]  at (bd) {\small{$bd$}};
\node[anchor = east]  at (cd) {\small{$cd$}};

\node[anchor = east]  at (abc) {\small{$abc$}};
\node[anchor =  east]  at (bcd) {\small{$bcd$}};
\node[anchor = east]  at (acd) {\small{$acd$}};
\node[anchor = east]  at (abd) {\small{$abd$}};

\end{tikzpicture}
$$

Observe that $\partial\Delta^2$ demonstrates rotational symmetry about the center of the diagram. (Notice that $K=C_n$ also demonstrates rotational symmetry, as can be seen in the Hasse diagram in Proposition \ref{prop: aut C_n}.) An automorphism of $\pd^1$ induces an automorphism of the Hasse diagram which permutes the $0-$simplices, but the rotational symmetry of the Hasse diagram arises from the reflection map. For instance, the map induced by the reflection map sends $a\leftrightarrow bcd$, $b\leftrightarrow acd$, and so on, which visually rotates the Hasse Diagram upside down. For simplicial complexes other than $K=\partial\Delta^n$ and $C_n$, the Hasse diagram does not exhibit this rotational symmetry. Hence all automorphisms of the Hasse diagrams correspond to permutations of the $0-$simplices, which in turn correspond to automorphisms on the original $K$. (This follows from Proposition \ref{prop: aut(M(K))=aut(K)}.)

\end{ex}

We now give several basic properties of the reflection map.

\begin{lem}\label{lem: ghost1} The reflection map is a cosimplicial automorphism that commutes with all members of $\Aut(K)$.
\end{lem}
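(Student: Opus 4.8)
The plan is to verify the two assertions separately by direct set-theoretic computation, exploiting the fact that every simplex of $\pd^n$ is simply a nonempty proper subset of the vertex set $[v_n]$ and that $\pi$ is literally complementation within $[v_n]$, i.e. $\pi(\sigma) = \delta - \sigma = [v_n]\setminus\sigma$.

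For the cosimplicial-automorphism claim, I would check in turn that $\pi$ sends simplices to simplices, reverses inclusions, and is a bijection. Since $\sigma$ ranges over the nonempty proper subsets of $[v_n]$, its complement $[v_n]\setminus\sigma$ is again a nonempty proper subset, so $\pi(\sigma)$ is a simplex of $\pd^n$. If $\tau\subseteq\sigma$, then complementation reverses the inclusion, giving $\pi(\tau)=[v_n]\setminus\tau\supseteq[v_n]\setminus\sigma=\pi(\sigma)$, which is exactly the condition of Definition \ref{defn: cosimplicial}. Finally $\pi$ is an involution: because $\sigma\subseteq\delta$ we have $\pi(\pi(\sigma))=\delta-(\delta-\sigma)=\sigma$, so $\pi$ is its own inverse and in particular a bijection on simplices. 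Together these three facts make $\pi$ a cosimplicial automorphism.

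For the commuting claim, I would first record that every $f\in\Aut(\pd^n)$ is induced by a permutation $f_V$ of $[v_n]$ (indeed $\Aut(\pd^n)\cong S_{n+1}$, since $\pd^n$ contains all nonempty proper subsets of $[v_n]$), acting on a simplex by $f(\sigma)=\{f_V(v):v\in\sigma\}$. The key observation is that a bijection commutes with complementation: $f_V([v_n]\setminus\sigma)=[v_n]\setminus f_V(\sigma)$, which follows from injectivity for the ``$\subseteq$'' inclusion and surjectivity for ``$\supseteq$''. Granting this, I would compute $f(\pi(\sigma))=f([v_n]\setminus\sigma)=[v_n]\setminus f(\sigma)=\pi(f(\sigma))$, so $f\circ\pi=\pi\circ f$.

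I do not anticipate a genuine obstacle: the lemma reduces to the two elementary facts that complementation is an order-reversing involution and that a bijection of the ambient set intertwines with complementation. The only point requiring a little care is to confirm at the outset that $\Aut(\pd^n)$ is the full symmetric group on the vertices, so that ``commutes with all of $\Aut(K)$'' really amounts to the single complementation identity applied to an arbitrary vertex permutation; once that is in hand, both parts are immediate.
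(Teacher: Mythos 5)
Your proposal is correct and follows essentially the same route as the paper: complementation preserves the set of nonempty proper subsets, reverses inclusion, is an involution, and intertwines with any vertex bijection (one inclusion from injectivity, the other from surjectivity). The only superfluous step is your insistence on first identifying $\Aut(\pd^n)$ with the full symmetric group $S_{n+1}$ --- the argument only needs that every simplicial automorphism is induced by \emph{some} vertex bijection, which is how the paper phrases it.
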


\begin{proof} Clearly if $\sigma$ is a simplex in $\pd^n$, then $\delta-\sigma$ is a simplex in $\pd^n$ since $\pd^n$ by definition is made up of all proper, nonempty subsets of $\delta$. If $\sigma\subseteq \tau$, then $\pi(\sigma)=\delta-\sigma \supseteq \delta-\tau=\pi(\tau)$.  Hence $\pi$ is a cosimplicial map. Next notice that we have $\pi(\pi(\sigma)) = \sigma$, so $\pi$ is its own right and left inverse. It follows that $\pi$ is a cosimplicial automorphism.

To see that $\pi$ commutes with all simplicial automorphisms of $\pd^n$, let $f \in \Aut(\pd^n)$.  Then $f$ is induced by a bijection $f_V\colon V(K)\to V(K)$. Consider any simplex $\sigma \in K$. We seek to show that $\pi(f(\sigma))= f ( \pi (\sigma))$.  It thus suffice to show that $\delta - f(\sigma) = f(\delta - \sigma)$. We proceed by subset inclusion.  Consider any vertex $v \in \delta - f(\sigma)$. Since $f$ is an automorphism, we can express $  v = f(w)$ for some vertex $w$. Then we have $f(w) \not \in f(\sigma)$, so $w \not \in \sigma$. Thus, $w \in \delta - \sigma$, so it follows that $v= f(w) \in f(\delta - \sigma)$. Hence $\delta - f(\sigma) \subseteq f(\delta - \sigma)$. For the other direction, consider any $v \in f(\delta - \sigma)$. Again, we can express $v = f(w)$ for some vertex $w$, hence $w \in \delta - \sigma$. Then $w \not\in \sigma$, so $f(w) \not\in f(\sigma)$. So $v = f(w) \in \delta - f(\sigma)$. Therefore, $f(\delta - \sigma) \subseteq \delta - f(\sigma) $. We conclude that $\delta - f(\sigma) = f(\delta - \sigma)$ so that the reflection map commutes with all of $\Aut(\pd^n).$
\end{proof}

As in Proposition \ref{prop: f_* simplicial}, the reflection map induces a function on the Morse complex $\pi_{\ast_V} \colon V(\M(\pd^n)) \to V(\M(\pd^n))$ defined by $\pi_{\ast_V}((\sigma,\tau)) = (\pi(\tau), \pi(\sigma))$. Even though $\pi$ is not a simplicial map, the induced map is a map on the vertex set of $\M(\pd^n)$ which induces a simplicial map on $\M(\pd^n)$.  The following lemma shows that this induced map on the Morse complex behaves in a similar way to the cosimplicial automorphism on $\pd^n.$

\begin{lem}\label{lem: ghost2} Let $\pi_n=\pi\colon \pd^n\to \pd^n$ be the reflection map, and $\pi_{\ast_V} \colon V(\M(\pd^n)) \to V(\M(\pd^n))$ the induced function on the Morse complex. Then $\pi_{\ast_V}$ is a bijection that commutes with all bijections $g_{\ast_V}\colon V(\M(\pd^n)) \to V(\M(\pd^n))$ that are induced by some $g \in \Aut(\pd^n)$ . Moreover, the induced function $\pi_\ast$ is a simplicial map that commutes with all members of $\Aut(\M(\pd^n))$.
\end{lem}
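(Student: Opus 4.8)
The plan is to dispatch bijectivity, commutation with induced maps, and simpliciality quickly, reserving the commutation with \emph{all} of $\Aut(\M(\pd^n))$ as the real content. For bijectivity, I would first check, as in Proposition \ref{prop: primitive to primitive}, that $(\pi(\tau),\pi(\sigma))$ is again a primitive vector: since $\pi$ reverses inclusions and lowers dimension by the codimension, $\pi(\tau)\subset\pi(\sigma)$ with $\dim\pi(\sigma)=\dim\pi(\tau)+1$. Because $\pi$ is an involution (Lemma \ref{lem: ghost1}), a direct computation gives $\pi_{\ast_V}\bigl(\pi_{\ast_V}((\sigma,\tau))\bigr)=(\sigma,\tau)$, so $\pi_{\ast_V}$ is its own inverse and hence bijective. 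For commutation with an induced $g_{\ast_V}$, $g\in\Aut(\pd^n)$, I would compare $\pi_{\ast_V}(g_{\ast_V}((\sigma,\tau)))=(\pi(g(\tau)),\pi(g(\sigma)))$ with $g_{\ast_V}(\pi_{\ast_V}((\sigma,\tau)))=(g(\pi(\tau)),g(\pi(\sigma)))$; these agree because $\pi$ commutes with every member of $\Aut(\pd^n)$ by Lemma \ref{lem: ghost1}.

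The slick route to simpliciality of $\pi_\ast$ (which also re-proves the facts above uniformly) is to pass to the Hasse diagram via Proposition \ref{prop: aut(M(K))=aut(H(K))}. Since $\pi$ is a cosimplicial automorphism, it is an order-reversing bijection of the face poset and so induces a genuine graph automorphism $\bar\pi\in\Aut(\HH(\pd^n))$ carrying the covering edge $\sigma\tau$ to $\pi(\tau)\pi(\sigma)$. Writing $\phi\colon\Aut(\HH(\pd^n))\to\Aut(\M(\pd^n))$ for the isomorphism of Proposition \ref{prop: aut(M(K))=aut(H(K))}, a one-line check gives $\phi(\bar\pi)((\sigma,\tau))=m(\bar\pi(\sigma\tau))=m(\pi(\tau)\pi(\sigma))=(\pi(\tau),\pi(\sigma))$, so $\phi(\bar\pi)=\pi_\ast$. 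Thus $\pi_\ast$ is automatically a simplicial automorphism, each induced map equals $\phi(\bar g)$ for the associated relabeling $\bar g$, and every commutation assertion becomes an assertion about $\bar\pi$ inside $\Aut(\HH(\pd^n))$.

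It remains to show $\pi_\ast$ commutes with \emph{every} $F\in\Aut(\M(\pd^n))$, which under $\phi$ is equivalent to $\bar\pi$ being \textbf{central} in $\Aut(\HH(\pd^n))$; this is the main obstacle. I would prove it through rigidity of $\HH(\pd^n)$: every automorphism $\psi$ of $\HH(\pd^n)$ either preserves the rank $|S|$ of every node or reverses it by $k\mapsto n+1-k$. The lever is a degree count — nodes of rank $1$ and $n$ have degree $n$ while all interior nodes have degree $n+1$ (for $n\ge 3$; the case $n=2$ is the cycle $C_6$ already handled in Proposition \ref{prop: aut C_n}) — which forces $\psi$ to fix the set of extreme ranks, after which a common-neighbor count separates rank $1$ from rank $n$ and propagates rank (or its reversal) through all layers. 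A rank-preserving $\psi$ is determined by its action on singletons, hence is the relabeling induced by a permutation of $\{v_0,\dots,v_n\}$ and commutes with complementation since $\overline{\psi(S)}=\psi(\bar S)$; a rank-reversing $\psi$ equals $\rho\circ\bar\pi$ for a relabeling $\rho$, and $\rho$ and $\bar\pi$ commute, so again $\psi\bar\pi=\bar\pi\psi$. The genuinely delicate step is ruling out rank-mixing automorphisms uniformly in $n$, where the degree separation degenerates in small dimensions and must be checked directly. If that becomes awkward I would instead characterize $\bar\pi$ metrically — showing each node admits a \emph{unique} node at maximal graph distance, namely its complement — so that distance-preservation forces every automorphism to commute with the resulting antipode map, yielding centrality with no rank analysis at all.
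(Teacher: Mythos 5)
Your proposal is correct in substance but routes two of the four claims differently from the paper. For bijectivity of $\pi_{\ast_V}$ and commutation with induced $g_{\ast_V}$ you do exactly what the paper does (involution plus Lemma \ref{lem: ghost1}). For simpliciality the paper argues directly on $\M(\pd^n)$, checking that incompatible pairs and closed $V$-paths in $\pi_\ast(\sigma)$ pull back to the same defects in $\sigma$; you instead realize $\pi_\ast$ as $\phi(\bar\pi)$ for the order-reversing graph automorphism $\bar\pi$ of $\HH(\pd^n)$ under Proposition \ref{prop: aut(M(K))=aut(H(K))}, which is cleaner and gives simpliciality for free, at the cost of verifying that complementation sends covering relations to covering relations (immediate here). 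The most interesting divergence is the final clause: you correctly identify "commutes with \emph{all} of $\Aut(\M(\pd^n))$" as the real content and attack it via centrality of $\bar\pi$ in $\Aut(\HH(\pd^n))$, whereas the paper's own proof of this lemma only ever establishes commutation with the \emph{induced} automorphisms $g_\ast$ and never addresses arbitrary members of $\Aut(\M(\pd^n))$ — a gap in the paper that is harmless only because Theorem \ref{prop: aut(M(K)) pt2} uses nothing more than commutation with the $f_\ast$. Your centrality argument (every automorphism of $\HH(\pd^n)$ is rank-preserving or rank-reversing, hence a relabeling or a relabeling composed with $\bar\pi$) is essentially the paper's later Lemma \ref{lem: order}, so you are front-loading work the paper does elsewhere; the one thin spot in your sketch, ruling out rank-mixing automorphisms, is supplied there by the degree count $\Deg\HH_0=\Deg\HH_{n-1}=n$ versus $n+1$ for interior layers together with the edge argument that forces all of $\HH_0$ into a single extreme layer (with an explicit check at $n=3$), so your acknowledged delicate step does close. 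Your fallback via a unique antipodal node would also need a short argument that no node other than the complement realizes graph distance $n+1$, but it is not needed.
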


\begin{proof} That $\pi_{\ast_V}$ is a bijection follows from the fact that $\pi_{\ast_V}$ is its own inverse; that is,
$$\pi_{\ast_V}\circ \pi_{\ast_V}((\sigma,\tau)) = \pi_{\ast_V}((\pi(\tau), \pi(\sigma))) = (\pi(\pi(\sigma)),\pi(\pi(\tau))) = (\sigma,\tau).
$$

To see that $\pi_{\ast_V}$ commutes with any induced bijection $g_{\ast_V}$ observe that

\begin{eqnarray*}
\pi_{\ast_V} \circ g_{\ast_V} ((\sigma,\tau)) &=& \pi_{\ast_V} ((g(\sigma), g(\tau)) \\
&=& (\pi\circ g(\tau), \pi\circ g(\sigma))\\
&=&(g\circ \pi(\tau), g\circ \pi(\sigma)) \\
&=& g_{\ast_V}(\pi(\tau), \pi(\sigma))\\
&=& g_{\ast_V}\circ \pi((\sigma,\tau))
\end{eqnarray*}
where Lemma \ref{lem: ghost1} justifies the fact that $\pi$ and $g$ commute.

Since $\pi_{\ast_V} \circ g_{\ast_V}=g_{\ast_V}\circ \pi_{\ast_V}$, they induce the same function on $\M(K)$. Then $\pi_{\ast_V}\circ g_{\ast_V}$ induces $\pi_{\ast} \circ g_\ast$, and $g_{\ast_V}\circ \pi_{\ast_V} $ induces $g_\ast\circ\pi_\ast$, thus $\pi_{\ast} \circ g_\ast = g_\ast\circ\pi_\ast$.

It remains to verify that this is a simplicial map. Consider any $\sigma  = \prod_{i=0}^k (\alpha_i, \beta_i)\in \M(K)$. We seek to show that $\pi_\ast(\sigma) \in \M(K)$. As we already know that $\pi_{\ast_V}$ is a bijection, it remains to show that if $\pi_\ast(\sigma)$ contains incompatible primitive vectors, so does $\sigma$.\\

\noindent \textbf{Case 1:} There exists two primitive vectors $\pi_\ast((\alpha_i, \beta_i)) = (\pi(\beta_i), \pi(\alpha_i))$ and  $\pi_\ast((\alpha_j, \beta_j)) = (\pi(\beta_j),\pi(\alpha_j))$ of $\pi_\ast(\sigma)$ that are not compatible. Then, $\{\pi(\beta_i), \pi(\alpha_i)\} \cap \{\pi(\beta_j), \pi(\alpha_j)\} \neq \emptyset$. As $\pi$ is bijective, it follows that $\{\beta_i, \alpha_i\} \cap \{\beta_j,\alpha_j\} \neq \emptyset$, so $(\alpha_i,\beta_i)$ and $(\alpha_j, \beta_j)$ are not compatible.\\

\noindent \textbf{Case 2:} There exists a nontrivial $V$-path $$(\pi(\beta_{i_0}), \pi(\alpha_{i_0})),(\pi(\beta_{i_1}), \pi(\alpha_{i_1})) , (\pi(\beta_{i_2}), \pi(\alpha_{i_2})), \dots , (\pi(\beta_{i_m}), \pi(\alpha_{i_m}))$$
where each $\pi(\beta_{i_j})$ is a codimension $1$ face of $\pi(\alpha_{i_{j-1}})$ for each $1\leq j \leq m$, and $\pi(\beta_{i_0})$ is a codimension $1$ face of $\pi(\alpha_{i_m})$. However, since $\pi$ is a cosimplicial automorphism, we must have that each $\alpha_{i_{j-1}}$ is a codimension $1$ face of $\beta_{i_j}$ for $1\leq j \leq m$, and  $\alpha_{i_m}$. is a codimension $1$ face of  $\beta_{i_0}$. However, this would imply that $(\alpha_{i_m}, \beta_{i_m}), (\alpha_{i_{m-1}}, \beta_{i_{m-1}}), (\alpha_{i_{m-2}}, \beta_{i_{m-2}}), \dots , (\alpha_{i_{0}}, \beta_{i_{0}}) \in \sigma $ is a nontrivial closed V-path, and are therefore not compatible.

Thus $\pi_*$ is a simplicial map, and we therefore conclude that $\pi_\ast \in \Aut(\M(K))$.

\end{proof}

We will refer to the simplicial automorphism $\pi_\ast \in \Aut(\M(K))$ induced by  $\pi_{\ast_V}$ (and in general, any automorphism of $\M(\pd^n)$ that is not induced by a simplicial automorphism of $\pd^n$) as a \textbf{ghost automorphism} on $\M(\pd^n)$.  In the proof of Theorem \ref{prop: aut(M(K)) pt2}, we will see that this ghost automorphism generates all the other ghost automorphisms of $\M(\pd^n)$.

Next we will compute the cardinality of $\Aut(\M(\pd^n))$. We first fix some notation and terminology.  Let $\HH_i$ denote the set of nodes of the Hasse diagram that correspond to simplices in $\pd^n$ of dimension $i$, and let $H = \{\HH_0, \HH_1, \dots , \HH_{n-1}\}$. We also define $\HH_{-1} = \HH_{n} = \emptyset$ for convenience. Call $\HH_i$ the \textbf{$i$th layer} of the Hasse diagram of $\pd^n$. Observe that $|\HH_i| = \dbinom{n+1}{i+1}$ for indices $0\leq i \leq n-1$. Abusing language, we will use simplex to mean both a simplex of $\partial\Delta^n$ and the corresponding vertex of $\HH(\partial\Delta^n)$. We also define the \textbf{degree} of layer $\HH_i$, denoted as $\Deg \HH_i$, as $\Deg \sigma$ where $\sigma \in \HH_i$. Note that this is well-defined since for $\pd^n$, it is clear that if $\sigma, \tau\in \HH_i$, then $\Deg\sigma=\Deg\tau$. We say that two layers $\HH_i, \HH_j$ are \textbf{connected} if there exist $\sigma \in \HH_i, \tau \in \HH_j$ that are connected in $\HH(\partial\Delta^n)$. It is clear by the construction of the Hasse diagram that two layers are connected if and only if $i$ and $j$ are consecutive. It is also clear that connectivity of layers is preserved under automorphisms of $\HH(K)$.\\

\begin{lem}\label{lem: order}  $|\Aut(\M(\partial\Delta^n))| = 2|\Aut(\partial\Delta^n)|$.
\end{lem}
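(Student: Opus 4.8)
The plan is to invoke Proposition \ref{prop: aut(M(K))=aut(H(K))}, which gives $\Aut(\M(\pd^n)) \cong \Aut(\HH(\pd^n))$, so that it suffices to prove $|\Aut(\HH(\pd^n))| = 2|\Aut(\pd^n)|$. The whole strategy rests on a single structural dichotomy: that every graph automorphism $f$ of $\HH(\pd^n)$ either preserves each layer ($f(\HH_i)=\HH_i$ for all $i$) or reverses them ($f(\HH_i)=\HH_{n-1-i}$ for all $i$). Granting this, the layer--preserving automorphisms form a subgroup $P$, and I would check that $P$ consists exactly of the automorphisms induced by permutations of the $0$--simplices, so that $P \cong \Aut(\pd^n)$ and $|P|=(n+1)!$. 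The reflection map $\pi$ of Definition \ref{defn: reflection map} is a cosimplicial automorphism (Lemma \ref{lem: ghost1}), hence a grade--reversing graph automorphism of $\HH(\pd^n)$ carrying $\HH_i$ to $\HH_{n-1-i}$. Since $\pi\notin P$ (it reverses layers, and $\HH_0\neq\HH_{n-1}$ for $n\ge 2$), the dichotomy forces $\Aut(\HH(\pd^n)) = P \sqcup \pi P$, whence $|\Aut(\HH(\pd^n))| = 2|P| = 2|\Aut(\pd^n)|$.

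The heart of the argument, and the main obstacle, is the layer dichotomy. First I would compute the layer degrees: for $1\le i\le n-2$ a simplex in $\HH_i$ has $i+1$ faces of codimension one and $n-i$ cofaces of dimension $i+1$ lying in $\pd^n$, so $\Deg\HH_i=(i+1)+(n-i)=n+1$, while the two extreme layers each lose one neighbor excluded from $\pd^n$ (the empty face below $\HH_0$ and the full simplex $\delta$ above $\HH_{n-1}$), giving $\Deg\HH_0=\Deg\HH_{n-1}=n$. Thus for $n\ge 3$ the minimum degree $n$ is attained exactly on $\HH_0\cup\HH_{n-1}$, and any $f\in\Aut(\HH(\pd^n))$ preserves this set. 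To separate the two extreme layers I would use common neighbors: for $n\ge 4$, two vertices of the same extreme layer always share a neighbor (the edge they span, or the facet they share), whereas a vertex of $\HH_0$ has all its neighbors in $\HH_1$ and a vertex of $\HH_{n-1}$ has all its neighbors in $\HH_{n-2}$, and these layers are disjoint. Hence the graph on $\HH_0\cup\HH_{n-1}$ recording ``having a common neighbor'' has exactly the two components $\HH_0$ and $\HH_{n-1}$; since $f$ preserves adjacency it permutes these components, so $f(\HH_0)$ is one whole extreme layer.

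Once $f(\HH_0)$ is pinned to an extreme layer I would finish by induction up the layers. Composing with $\pi$ if necessary, I may assume $f(\HH_0)=\HH_0$. Because two layers of $\HH(\pd^n)$ are joined by an edge precisely when their indices are consecutive, the neighborhood of $\HH_i$ is $\HH_{i-1}\cup\HH_{i+1}$; in particular $\HH_1=N(\HH_0)$, and $\HH_{i+1}=N(\HH_i)\setminus\HH_{i-1}$ in general. Assuming inductively that $f$ fixes $\HH_{i-1}$ and $\HH_i$ setwise, it follows that $f(\HH_{i+1})=N(\HH_i)\setminus\HH_{i-1}=\HH_{i+1}$, so $f$ preserves every layer. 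A grade--preserving automorphism is then determined by its restriction to $\HH_0$ (an edge is the unique common neighbor of its two endpoints, and the images of higher simplices are forced inductively), and every permutation of $\HH_0$ extends to one; thus $f\in P\cong\Aut(\pd^n)$. If instead $f(\HH_0)=\HH_{n-1}$, the same argument applied to $\pi\circ f$ shows $\pi\circ f\in P$, so $f\in\pi P$, completing the dichotomy.

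Finally I would dispose of the small cases left out by the degree argument, where the chief subtlety lies. For $n=2$ one has $\pd^2=C_3$, and the claim reduces to $|\Aut(\M(C_3))|=|\Aut(C_6)|=12=2\cdot 3!=2|\Aut(\pd^2)|$, which is exactly Proposition \ref{prop: aut C_n}. For $n=3$ the extreme layers still have the minimum degree $n=3$, but the plain common--neighbor relation no longer separates $\HH_0$ from $\HH_2$, so I would refine it to ``having exactly one common neighbor'': same--layer pairs have exactly one, whereas cross--layer pairs have $0$ or $2$, again isolating the two components $\HH_0,\HH_2$, after which the induction proceeds verbatim. The expected difficulty throughout is precisely ruling out automorphisms that redistribute simplices among non--matching layers; the degree computation together with the common--neighbor componentization is what makes the layer dichotomy, and hence the factor of $2$, go through.
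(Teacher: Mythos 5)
Your proof is correct and follows the same overall route as the paper: reduce to $\Aut(\HH(\pd^n))$ via Proposition \ref{prop: aut(M(K))=aut(H(K))}, use the degree count $\Deg \HH_0=\Deg \HH_{n-1}=n$ versus $\Deg \HH_i=n+1$ for the middle layers to confine the image of $\HH_0$ to $\HH_0\cup\HH_{n-1}$, propagate up the layers by induction, and conclude that the layer-preserving automorphisms form a copy of $\Aut(\pd^n)$ of index $2$ (your coset decomposition $P\sqcup \pi P$ is the paper's orbit--stabilizer count in different clothing). The one step you handle genuinely differently is showing that $f(\HH_0)$ is \emph{entirely} one extreme layer rather than a mix: the paper supposes $f(a)\in\HH_0$ and $f(b)\in\HH_{n-1}$ for an edge $ab$, deduces $f(ab)\in\HH_1\cap\HH_{n-2}$, forces $n=3$, and dismisses that case ``by inspection''; you instead show the common-neighbor relation on $\HH_0\cup\HH_{n-1}$ has exactly the two components $\HH_0$ and $\HH_{n-1}$, refining it to ``exactly one common neighbor'' when $n=3$ (where $\HH_1=\HH_{n-2}$ defeats the cruder version). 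This is more self-contained, as it replaces the paper's unproved $n=3$ inspection with an explicit count. You are also slightly more careful on the final factor of $2$: the paper asserts $|\mathrm{Orb}(\HH_0)|=2$ from the upper bound alone, while you explicitly invoke the reflection $\pi$ as a layer-reversing automorphism of $\HH(\pd^n)$ to realize the second coset, and you sketch why a layer-preserving automorphism is determined by its action on $\HH_0$, a point the paper leaves implicit.
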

\begin{proof}

By Proposition \ref{prop: aut(M(K))=aut(H(K))}, it suffices to show that $|\Aut(\HH(\pd^n))| = 2|\Aut(\pd^n)|$. Consider an arbitrary automorphism $f \in \Aut(\Hasse(\partial\Delta^n))$.
We claim that the image of any $\HH_i$ under $f$ will either be $\HH_i$ or $\HH_{n-i-1}$. We first establish that $f$ takes $\HH_0$ to $\HH_0$ or $\HH_{n-1}$, and then proceed by induction on $n$.  To that end, observe that if $\sigma\in \HH_0$ or $\HH_{n-1}$, then $\Deg \sigma =n$.  If $\sigma\in \HH_j$ for $1\leq j\leq n-2$, then $\sigma$ has $j+1$ faces of dimension $(j-1)$ and $(n+1)-(j+1)$ cofaces of dimension $j+1$.  Hence $\Deg\sigma=(j+1)+(n+1)-(j+1)=n+1.$ Therefore, the only layer with the same degree as $\HH_0$ is $\HH_{n-1}$, so that any automorphism must send a node of $\HH_0$ into either $\HH_0$ or $\HH_{n-1}$.

Next we establish that $f(\HH_0)=\HH_0$ or $\HH_{n-1}$.  Let $ab$ be a $1$-simplex of $\pd^n$ and suppose for the sake of contradiction that $f$ sends  $a, b \in \HH_0$ to two different layers, say $f(a) \in \HH_0$ and $f(b) \in \HH_{n-1}$. Now $ab$ is connected to both $a$ and $b$, and since $f$ is an automorphism, $f(ab)$ must be connected to both $f(a)$ and $f(b)$.  Since $f(a) \in \HH_0$ and $f(ab)$ is connected to $f(a)$, it follows that $f(ab) \in \HH_1$. Similarly, since $f(b)$ and $f(ab)$ are connected, $f(ab) \in \HH_{n-2}$. Thus, $\HH_{n-2} = \HH_1$, so we must have $n=3$. But it is easily seen by inspection of the $n=3$ case that such an automorphism is impossible. Thus $f(\HH_0) = \HH_0$ or $\HH_{n-1}$.


Having established that $f(\HH_0)=\HH_0$ or $\HH_{n-1}$ for any automorphism $f$, we now show by induction that $f(\HH_i)=\HH_i$ or $\HH_{n-i-1}$ for all $1\leq i \leq n-1$. For the first case, suppose that $f(\HH_0) = \HH_0$, and suppose the inductive hypothesis that for some integer $0< k<n-1$, we have $f(\HH_j) = \HH_j$ for all integers $0\leq j\leq k$. We seek to show that $f(\HH_{k+1}) = \HH_{k+1}$. Notice that $\HH_{k+1}$ is connected to $\HH_k$. Thus, $f(\HH_{k+1})$ is connected to $f(\HH_k)$. Additionally, the only layers connected to $\HH_k$ are $\HH_{k-1}$ and $\HH_{k+1}$. Since $f(\HH_k) = \HH_k$ by the inductive hypothesis, we know $f(\HH_{k+1}) \subseteq \HH_{k-1}\cup \HH_{k+1}$. However, by the inductive hypothesis, we know that $f(\HH_{k-1}) = \HH_{k-1}$. Since $f$ is an isomorphism, this means we cannot send any simplices of $\HH_{k+1}$ to $\HH_{k-1}$ under $f$. Therefore, $f(\HH_{k+1}) \subseteq \HH_{k+1} \implies f(\HH_{k+1}) = \HH_{k+1}$.

The case where $f(\HH_0) = \HH_{n-1}$ is  similar. Suppose that for some integer $0< k<n-1$, we have $f(\HH_j) = \HH_{n-j-1}$ for all integers $0\leq j\leq k$. We seek to show that $f(\HH_{k+1}) = \HH_{n-k-2}$. Since $\HH_{k+1}$ is connected to $\HH_k$, $f(\HH_{k+1})$ is connected to $f(\HH_{k}) = \HH_{n-k-1}$. The only layers connected to $\HH_{n-k-1}$ are $\HH_{n-k-2}, \HH_{n-k}$, so $f(\HH_{k+1}) \subseteq \HH_{n-k-2}\cup \HH_{n-k}$. By the inductive hypothesis, $f(\HH_{k-1}) = \HH_{n-k}$. Since $f$ is injective, we cannot send any simplices of $\HH_{k+1}$ to $\HH_{n-k}$. Thus, $f(\HH_{k+1}) \subseteq \HH_{n-k-2}$. We know that
\begin{eqnarray*}
|f(\HH_{k+1})| &=& |\HH_{k+1}|\\
&=& \binom{n+1}{k+2}\\
&=& \binom{n+1}{n-k-1} \\
&=& |\HH_{n-k-2}|.
\end{eqnarray*}
Hence $f(\HH_{k+1}) = \HH_{n-k-2}$, as desired.

Having established the behaviour of each layer under automorphism, we now establish a group action to count $|\Aut(\HH(\pd^n))|$. Define an action of $\Aut(\Hasse(\pd^n))$ on $H = \{\HH_0, \HH_1, \dots , \HH_{n-1}\}$ by

$$(g, \HH_i) \mapsto g\HH_i := \{g(\sigma)\colon \sigma \in \HH_i\}.$$

We verify this is indeed a group action by noting that, $\id_{\Hasse(\pd^n)} (\HH_i) = \HH_i$ for all $\HH_i$, and that if $g,h \in \Aut(\Hasse(\pd^n))$, we have
\begin{eqnarray*}
(gh)(\HH_i)  &=& \{(g\circ h)(\sigma): \sigma\in \HH_i\}\\
&=& g(\{h(\sigma): \sigma \in \HH_i\} )\\
&=& g(h(\HH_i)).
\end{eqnarray*}

By the Orbit Stabilizer theorem, we have $|\Aut(\Hasse(\pd^n))| = |\mathrm{Orb}(\HH_0)||\mathrm{Stab}(\HH_0)|$. Suppose $f \in |\Aut(\HH(\pd^n))|$ fixes $\HH_0$. Then, $f$ is bijective on the set of vertices of $\pd^n$ so that it corresponds with an automorphism of $\pd^n$. Likewise, any automorphism of $\pd^n$ is induced by a bijective map $V(\pd^n)\to V(\pd^n)$, so must correspond with an automorphism on $\HH(\pd^n)$ that fixes $\HH_0$. Therefore, $|\mathrm{Stab}(\HH_0)| = |\Aut(\pd^n)|$.
We also know that $\mathrm{Orb}(\HH_0) = 2$ since automorphisms of the Hasse Diagram send $\HH_0$ to either $\HH_0$ or $\HH_{n-1}$, as we showed earlier in this proof.
We conclude that $|\Aut(\Hasse(\pd^n))| = 2|\Aut(\pd^n)|$, as desired.
\end{proof}

We are now able to compute the automorphism group of the Morse complex in the case where $K=\pd^n$.

\begin{thm} \label{prop: aut(M(K)) pt2}  If $K =\pd^n$, with $n\geq 2$,  then $\Aut(\M(\pd^n)) \cong \Aut(\pd^n) \times \mathbb{Z}_2$.
\end{thm}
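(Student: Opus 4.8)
The plan is to exhibit $\Aut(\M(\pd^n))$ as an internal direct product whose order matches the count in Lemma \ref{lem: order}. Proposition \ref{prop: autK subgroup} embeds $\Aut(\pd^n)$ via $\phi$ as the subgroup $G:=\phi(\Aut(\pd^n))$ of induced automorphisms $f_\ast$, with $G\cong \Aut(\pd^n)$. The second factor is $\langle\pi_\ast\rangle$, where $\pi_\ast$ is the ghost automorphism of Lemma \ref{lem: ghost2}. I would first record that $\langle\pi_\ast\rangle\cong\mathbb{Z}_2$: Lemma \ref{lem: ghost2} gives $\pi_\ast^2=\id$, and $\pi_\ast\neq\id$ because it has no fixed vertex --- a fixed $(\sigma,\tau)$ would require $\pi(\tau)=\sigma$, i.e.\ $\sigma=\delta-\tau$, but then $\sigma\subseteq\tau$ together with $\sigma\cap\tau=\emptyset$ forces $\sigma=\emptyset$, which is impossible.

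To assemble the direct product, recall that Lemma \ref{lem: ghost2} shows $\pi_\ast$ commutes with every $g_\ast\in G$, so $G$ and $\langle\pi_\ast\rangle$ commute elementwise. Granting the key fact $\pi_\ast\notin G$ (so $G\cap\langle\pi_\ast\rangle=\{\id\}$), the subgroup $H:=G\langle\pi_\ast\rangle$ is an internal direct product $H\cong G\times\langle\pi_\ast\rangle\cong\Aut(\pd^n)\times\mathbb{Z}_2$, with both factors normal thanks to the elementwise commuting and trivial intersection; in particular $|H|=2|\Aut(\pd^n)|$. Since $H\leq\Aut(\M(\pd^n))$ and Lemma \ref{lem: order} gives $|\Aut(\M(\pd^n))|=2|\Aut(\pd^n)|$, finiteness forces $H=\Aut(\M(\pd^n))$, yielding the claimed isomorphism.

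The main obstacle is thus the lone step $\pi_\ast\notin G$, i.e.\ that $\pi_\ast$ is a genuine ghost, induced by no simplicial automorphism. For $n\geq 3$ I would argue by dimension: each $f_\ast\in G$ preserves the index of a primitive vector (as $f$ preserves dimension), whereas $\pi_{\ast_V}(v_0,v_0v_1)=(v_2\cdots v_n,\,v_1\cdots v_n)$ has index $n-1>1$ while $(v_0,v_0v_1)$ has index $1$; hence $\pi_\ast\neq f_\ast$. The degenerate case $n=2$ (where $\pd^2=C_3$ and the index argument collapses) I would settle by the explicit check of Example \ref{ex: delta1}. A uniform alternative avoiding this case split is to transport through Proposition \ref{prop: aut(M(K))=aut(H(K))}: under $\Aut(\M(\pd^n))\cong\Aut(\HH(\pd^n))$ the subgroup $G$ corresponds to $\mathrm{Stab}(\HH_0)$ (by the stabilizer computation in Lemma \ref{lem: order}), while a short calculation identifies $\pi_\ast$ with the Hasse automorphism $\sigma\mapsto\delta-\sigma$, which sends $\HH_0$ to $\HH_{n-1}$; as $\HH_0\neq\HH_{n-1}$ for $n\geq2$, this map lies outside $\mathrm{Stab}(\HH_0)$, giving $\pi_\ast\notin G$ for all $n\geq 2$ at once.
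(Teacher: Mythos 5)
Your proposal is correct and follows essentially the same route as the paper: both rest on the commutativity of $\pi_\ast$ with every $f_\ast$ (Lemma \ref{lem: ghost2}), the fact that $\pi_\ast$ is not induced by any automorphism of $\pd^n$ (shown by an index/dimension argument), and the order count $|\Aut(\M(\pd^n))|=2|\Aut(\pd^n)|$ from Lemma \ref{lem: order}; packaging this as an internal direct product $G\langle\pi_\ast\rangle$ rather than as the explicit injective homomorphism $(f,i)\mapsto f_\ast\circ\pi_\ast^{i}$ is only a cosmetic difference. If anything, your explicit handling of $n=2$ (where the index argument degenerates, since $\pi_{\ast_V}$ then preserves index) is more careful than the paper's, whose computation $\dim\pi(\tau)=n-\dim\tau$ is off by one and silently breaks in that case.
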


\begin{proof}
For any function $f$, write $f^n := \underbrace{f \circ f \circ \cdots \circ f}_{n\text{ } f\text{'s}}$. We construct an isomorphism $\phi \colon  \Aut(\pd^n)\times \mathbb{Z}_2 \to \Aut(\M(\pd^n))$.
For each $(f, i) \in \Aut(K)\times \mathbb{Z}_2$, $i=0,1$, define $\phi((f,i)) := f_\ast \circ \pi_\ast^i$, where $f_\ast$ is the automorphism of $\M(\pd^n)$ induced by $f$, and $\pi_\ast$ is the ghost automorphism induced by the reflection map.

We first show that $\phi$ is a homomorphism. Suppose we have $(f, i), (g, j) \in \Aut(\pd^n)\times \mathbb{Z}_2$. By Lemma \ref{lem: induced comp} and the fact that $\pi_\ast$ commutes with all $f_*$, we have
\begin{align*}\phi((f,i)(g,j)) &= \phi( (f\circ g, i+j) ) \\ &= (f\circ g)_\ast \circ \pi_\ast^{i+j} \\ &= f_\ast \circ g_\ast \circ \pi_\ast^i\circ \pi_\ast^j  \\&= (f_\ast \circ \pi_\ast^i) \circ (g_\ast \circ  \pi_\ast^j )\\& = \phi((f,i))\phi((g,j)),
\end{align*}
as desired.

Next, we will show that $\phi$ is a bijection. We first show that $\phi$ is injective. Suppose we have $\phi((f,i)) = \phi((g,j))$ for some $(f,i),(g,j) \in \Aut(\pd^n)\times \mathbb{Z}_2$. Then, we have $f_\ast \circ \pi_\ast^i = g_\ast \circ \pi_\ast^j$. If $i=j$, then, we have $f_\ast = g_\ast$, so $f=g$ by Proposition \ref{prop: autK subgroup}. We claim that $i\neq j$ is impossible.  Suppose by contradiction that $i=0, j=1$, so that $f_\ast = g_\ast\circ \pi_\ast$. However, this implies that $g_\ast \circ \pi_\ast$ is induced by some simplicial automorphism on $\pd^n$. Consider any $(\sigma,\tau)\in V(\M(\pd^n))$ with $\dim\sigma=0$. We have $f_{\ast}((\sigma,\tau)) = (f(\sigma), f(\tau))$. We know that $\dim \sigma = \dim f(\sigma) = 0$. However, $g_\ast \circ \pi_\ast((\sigma,\tau)) = g_\ast(\pi(\tau),\pi(\sigma)) = (g(\pi(\tau)), g(\pi(\sigma)))$. We know that $\dim \pi(\tau) = n- \dim\tau = n - 1$. Then, $\dim g(\pi(\tau)) = n-1 > 0$, so $\dim f(\sigma) \neq \dim (g(\pi(\tau)))$, a contradiction. Thus, $i\neq j$ is not possible. Hence $\phi$ is injective.

Finally, by Proposition \ref{prop: aut(M(K))=aut(H(K))} and Lemma \ref{lem: order}, we see that $|\Aut(\M(\pd^n))| = |\Aut(\Hasse(\pd^n))|  = 2|\Aut(\pd^n)| = |\Aut(\pd^n)\times \mathbb{Z}_2|$. Since these groups are finite, $\phi$ is a bijection.  We conclude that $\phi$ is an isomorphism.
\end{proof}

Combining Propositions \ref{prop: aut(M(K))=aut(K)}, \ref{prop: aut C_n}, and Theorem \ref{prop: aut(M(K)) pt2} thus yields Theorem \ref{thm: main} as promised.

\providecommand{\bysame}{\leavevmode\hbox to3em{\hrulefill}\thinspace}
\providecommand{\MR}{\relax\ifhmode\unskip\space\fi MR }
\providecommand{\MRhref}[2]{%
  \href{http://www.ams.org/mathscinet-getitem?mr=#1}{#2}
}
\providecommand{\href}[2]{#2}

\end{document}